\documentclass[12pt, leqno]{article}

\usepackage[margin=1in]{geometry}
\usepackage{amsmath, amssymb, amsthm, mathtools}
\usepackage{mathrsfs}
\usepackage{enumitem}
\usepackage[colorlinks=true,linkcolor=blue,citecolor=blue,urlcolor=blue]{hyperref}
\usepackage{cite}
\usepackage{xcolor}
\usepackage{authblk}

\newtheorem{theorem}{Theorem}[section]

\newtheorem{proposition}[theorem]{Proposition}
\newtheorem{lemma}[theorem]{Lemma}
\newtheorem{corollary}[theorem]{Corollary}

\theoremstyle{definition}
\newtheorem{definition}[theorem]{Definition}

\newtheorem{remark}[theorem]{Remark}
\newtheorem{example}[theorem]{Example}

\numberwithin{equation}{section}

\newcommand{\R}{\mathbb R}

\newcommand{\jet}[2]{J^{(#1)}_{#2}}
\newcommand{\norm}[1]{\left\lVert #1 \right\rVert}

\DeclareMathOperator{\supp}{supp}

\newcommand{\set}[1]{\left\{#1\right\}}

\newcommand{\eqindent}{\displayindent0pt\displaywidth\textwidth}

\renewcommand{\epsilon}{\varepsilon}
\newcommand{\eps}{\epsilon}

\title{Smooth solutions to systems of linear inequalities on $\mathbb{R}$}
\author[1]{Fushuai Jiang}
\author[2]{Garving K. Luli}
\affil[1]{Department of Mathematics, City University of Hong Kong}
\affil[2]{Department of Mathematics, University of California - Davis}
\date{}
\begin{document}

\maketitle

\begin{abstract}
Fix an integer $m\geq 0$. We study the one-dimensional problem of deciding when a system of linear inequalities
\begin{equation*}
    \sum_{j=1}^M A_{ij}(x)F_j(x)\leq f_i(x),\qquad i=1,\ldots,N,
\end{equation*}
with fixed semialgebraic coefficients $A_{ij}:\mathbb R\to\mathbb R$ and given functions
\begin{equation*}
    f_i\in C^\infty(\mathbb R),\qquad i=1,\ldots,N,
\end{equation*}
admits a solution
\begin{equation*}
    F=(F_1,\ldots,F_M)\in C^m(\mathbb R,\mathbb R^M).
\end{equation*}
The analogous problem for systems of linear equations admits a finite linear differential criterion, whereas for systems of inequalities such a criterion fails in general in dimensions at least two, already for continuous solutions. This paper gives a complete answer in one dimension for a system of linear inequalities. For $m=0,1,2$, the existence of a $C^m$ solution is characterized by a finite differential criterion. In contrast, for every $m\geq 3$, no finite criterion of this local differential type exists in general, even for a fixed constant-coefficient system. Thus, $m=2$ is the sharp regularity threshold for finite differential characterization of the one-dimensional inequality problem.
\end{abstract}

\section{Introduction}

Let $m\geq 0$ be an integer. Let $M,N\geq 1$, and for $1\leq i\leq N$ and $1\leq j\leq M$, let
\[
A_{ij}:\mathbb R\to\mathbb R
\]
be fixed semialgebraic functions, meaning that their graphs are finite unions of sets defined by polynomial equalities and inequalities (see~\cite{BochnakCosteRoy1998}). Given
\[
f=(f_1,\ldots,f_N)\in C^\infty(\mathbb R,\mathbb R^N),
\]
we ask whether there exists
\[
F=(F_1,\ldots,F_M)\in C^m(\mathbb R,\mathbb R^M)
\]
such that
\begin{equation}
\sum_{j=1}^M A_{ij}(x)F_j(x)\leq f_i(x),
\qquad x\in\mathbb R,\quad i=1,\ldots,N.
\label{1.1}
\end{equation}

The problem is posed for arbitrary regularity $m\geq 0$.  In this paper, we show in Theorem~\ref{thm:main-theorem} that for $m=0,1,2$, the existence of a $C^m$ solution can be characterized by finitely many local linear-differential sign conditions on the data. We also show in Theorem~\ref{thm:C3 sharp} that, starting at $m=3$, no criterion of this finite local differential type exists in general, even for a fixed constant-coefficient system.

In \cite{FeffermanLuli2021Equations,FeffermanLuli2021Generators}, C.~Fefferman and the second author proved that the corresponding problem for systems of linear equations has a finite differential criterion. For fixed semialgebraic coefficients $A_{ij}$, the solvability of
\begin{equation}
\sum_{j=1}^M A_{ij}(x)F_j(x)=f_i(x),
\qquad i=1,\ldots,N,
\label{1.2}
\end{equation}
can be characterized by finitely many linear differential equations in $f$, with semialgebraic coefficients. More precisely, they proved the following:

\begin{theorem}[\hspace{1sp}\cite{FeffermanLuli2021Equations,FeffermanLuli2021Generators}]
Fix $m\geq 0$, and let $(A_{ij}(x))_{1\leq i\leq N,\ 1\leq j\leq M}$ be a matrix of semialgebraic functions on $\mathbb R^n$. Then there exist $\bar m\geq m$ and a finite list of linear partial differential operators
\[
L_\nu f(x)
=
\sum_{i=1}^N
\sum_{|\alpha|\leq \bar m}
a_{\nu i\alpha}(x)\partial^\alpha f_i(x),
\]
with semialgebraic coefficient functions $a_{\nu i\alpha}$, such that \eqref{1.2} admits a solution
\[
F\in C^m(\mathbb R^n,\mathbb R^M)
\]
if and only if
\[
L_\nu f\equiv 0
\text{\quad for every $\nu$.}
\]
\end{theorem}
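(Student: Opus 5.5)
The plan is to recast solvability of \eqref{1.2} as the existence of a $C^m$ section of an affine bundle of $m$-jets. Then I would use Glaeser refinement together with semialgebraic geometry to show that the obstruction to such a section is expressed by finitely many linear differential expressions in $f$.

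\textbf{Step 1 (the jet bundle).} Let $\mathcal P$ be the space of $\mathbb R^M$-valued polynomials of degree $\le m$ on $\mathbb R^n$. For each $x$, define $H_f(x)\subset\mathcal P$ to be the set of jets $P=(P_1,\ldots,P_M)$ satisfying
\[
\sum_j A_{ij}(x)P_j(x)=f_i(x),\qquad i=1,\ldots,N.
\]
This fiber is an affine subspace, possibly empty, and its translation space $H^0(x)$ does not depend on $f$. A $C^m$ solution $F$ of \eqref{1.2} is the same thing as a $C^m$ section $x\mapsto \jet{m}{x}F\in H_f(x)$. I would then take Glaeser refinements $H_f\supseteq H_f^{(1)}\supseteq H_f^{(2)}\supseteq\cdots$. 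Here $P\in H^{(k+1)}_f(x_0)$ if for every $\eps>0$ there is $\delta>0$ with the following property: any $x_1,\ldots,x_K\in B(x_0,\delta)$ admit $P_l\in H_f^{(k)}(x_l)$ that are $\eps$-close to $P$ in the natural rescaled Whitney norm. By the Fefferman / Bierstone--Milman--Paw{\l}ucki theory of $C^m$ sections of affine bundles, the refinements stabilize after at most $2\dim\mathcal P+1$ steps. Moreover, a $C^m$ section exists if and only if the stable refined bundle has nonempty fibers everywhere.

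\textbf{Step 2 (semialgebraic control of the refinement).} Because the $A_{ij}$ are semialgebraic, the translation spaces $H^{0,(k)}(x)$ of the refined bundles are semialgebraic in $x$ and independent of $f$. I would stratify $\mathbb R^n$ into finitely many semialgebraic cells on which each $H^{0,(k)}$ has constant dimension and is given by a semialgebraic (Nash) frame. On each cell, and near cell boundaries, I would use curve selection and Puiseux expansions to rewrite the $\eps$--$\delta$ limit in the refinement. The goal is that the refinement becomes a finite list of conditions of the form $L f(x)=0$, where $L$ is a linear differential operator with semialgebraic coefficients of some bounded order $\bar m$. Limits of $f$ along semialgebraic curves would be replaced by Taylor expansions of $f$ at the base point.

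\textbf{Step 3 (conclusion).} Nonemptiness of the final fiber is a linear condition on the data vector. The data consist of $f(x)$ and the derivatives of $f$ at $x$ produced in Step 2. Linear algebra with semialgebraic coefficients, using Cramer's rule on each cell, writes this condition as finitely many equations $L_\nu f(x)=0$. The resulting pieces are then glued over the finitely many cells.

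\textbf{Main obstacle.} The hard step is Step 2. A priori the Glaeser limit condition depends on values of $f$ at nearby points, not on derivatives at a single point. One must show that, for $C^\infty$ $f$, this nonlocal condition collapses to a pointwise differential one. One must also show that the order $\bar m$ and the number of operators stay bounded uniformly, even near the singular set where the rank of $A$ drops. My first attempt would be induction on the dimension of strata. On the open dense stratum where $A$ has locally constant rank, solvability is a pointwise linear condition and no derivatives are needed. Near a lower-dimensional stratum, I would resolve the degeneration with semialgebraic changes of coordinates, such as a local monomialization of the minors of $A$. I would then argue that compatibility of $f$ is detected by finitely many Taylor coefficients, which is the content of the ``generators'' result of \cite{FeffermanLuli2021Generators}.
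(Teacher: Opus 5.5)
The paper does not prove this theorem. It only quotes it from Fefferman--Luli, so your outline has to stand on its own, and as it stands it has a genuine gap: Step~2 is the whole theorem, and it is not proved. Steps~1 and~3 set up a natural framework: the affine jet bundle $H_f$, Glaeser refinement and its stabilization, semialgebraicity of the homogeneous refinements via Tarski--Seidenberg, and Cramer's rule on cells. That framework is bookkeeping. In Step~2 you state the goal, name the obstacle, and then close by saying that the needed finiteness ``is the content of the generators result''. But that result comes from one of the two papers cited for this very statement, so the argument is circular.

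The mechanism you suggest for Step~2 also does not work as described once $n\geq 2$.
\begin{itemize}
\item Replacing values of $f$ along semialgebraic curves by a high-order Taylor polynomial at the base point is exactly the one-dimensional device of Lemma~\ref{lem:3.3}. There the coefficients are $O(t^{-K})$, and one expands to order $s$ with $s-K>m+1$. This works because the exceptional set is finite, so the blow-up of the semialgebraic coefficients is measured by $|x-x_0|$.
\item In higher dimensions the degeneracy locus $\Sigma$ of $A$ can be positive-dimensional through $x_0$. Solutions of $A(x)P(x)=f(x)$ are then controlled only by negative powers of $\operatorname{dist}(x,\Sigma)$, which can be arbitrarily small compared with $|x-x_0|$.
\item Glaeser refinement demands mutual compatibility $|\partial^\alpha(P_i-P_j)(x_j)|\leq\eps|x_i-x_j|^{m-|\alpha|}$ for all pairs of chosen points. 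Closeness of each $P_l$ to $P$, as your paraphrase says, is not enough. So clusters of points hugging $\Sigma$ away from $x_0$ impose conditions that no Taylor polynomial at $x_0$ detects. This is why, in \eqref{intro3.3}, conditions are needed along the whole line $x=y=0$ and not only at the origin.
\item For the same reason, the refined fiber $H^{(k)}_f(x_0)$ is not a function of the jet of $f$ at $x_0$. A datum that is flat at $x_0$ but violates pointwise solvability at points accumulating at $x_0$ has empty refined fiber there, while $f\equiv 0$ does not.
\end{itemize}

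What must be proved is an inductive, stratum-wise statement. Roughly: assuming the operators already found vanish on a neighborhood, each refined fiber is cut out by linear equations in $P$ and $J^{(\bar m)}_x f$ with semialgebraic coefficients. Moreover, $\bar m$ and the number of equations must stay bounded uniformly up to the frontier of each stratum, which requires quantitative (\L{}ojasiewicz-type) control of the blow-up rates.

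Finally, a local monomialization of the minors of $A$ cannot in general be achieved by diffeomorphisms. $C^m$ solvability does not descend along blowings-up or semialgebraic homeomorphisms, so that step would need its own composite-function-type argument. None of this is supplied, and it is precisely the missing content.
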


Let us review an example of Epstein and Hochster \cite{EpsteinHochster2018} to illustrate the content of this theorem concretely. Consider the single linear equation
\begin{equation}\label{eq:Epstein Hochster example}
    x^2F_1+y^2F_2+xyz^2F_3=f(x,y,z).
\end{equation}

There exist continuous $F_1,F_2,F_3$ (no differentiability required) satisfying \eqref{eq:Epstein Hochster example} if and only if
\begin{equation}
\left[ 
\begin{array}{l}
f\left( x,y,z\right) =\frac{\partial f}{\partial x}\left( x,y,z\right) =%
\frac{\partial f}{\partial y}\left( x,y,z\right) =0 \\ 
\text{and} \\ 
\frac{\partial ^{2}f}{\partial x\partial y}\left( x,y,z\right) =\frac{%
\partial ^{3}f}{\partial x\partial y\partial z}\left( x,y,z\right) =0%
\end{array}%
\right. 
\begin{array}{l}
\text{for }x=y=0,z\in \mathbb{R} \\ 
\\ 
\text{at }x=y=z=0\text{.}%
\end{array}
\label{intro3.3}
\end{equation}

For the history and recent progress on this problem, see \cite{BierstoneCampesatoMilman2021,Fefferman2006WhitneyCm,FeffermanIsraelLuli2016Selection,FeffermanLuli2022Plane,JiangLuliONeill2022Shape,LuliONeill2023Inequalities,JiangLuliONeill2022Smooth} and the references therein.

In \cite{LuliONeill2023Inequalities}, the authors ask for an inequality analogue of this theorem. They show that such a criterion fails in general in dimensions $n\geq 2$, already for continuous solutions. The corresponding problem for one dimension was left open. The case $n=1$ is fundamentally different. The real line has only two one-sided directions at a point, and one-variable semialgebraic functions have finite Puiseux asymptotics. These two features permit a finite analysis of the behavior at the exceptional points of the coefficient matrix.

A second ingredient is the one-dimensional Riesz interpolation theorem of Nagel and Rudin~\cite{NagelRudin1974}. For each
\[
m\in\{0,1,2\},
\]
the ordered space $C^m(\mathbb R)$ has the Riesz decomposition property, which is equivalent to the weak-lattice interpolation property: if $L_1,L_2,U_1,U_2\in C^m(\mathbb R)$
satisfy
\[
\max\{L_1,L_2\} \leq \min\{U_1,U_2\},
\]
% \[
% L_i\leq U_j
% \qquad
% (i,j=1,2),
% \]
then there exists $h\in C^m(\mathbb R)$
such that
% \[
% L_i\leq h\leq U_j
% \qquad
% (i,j=1,2),
% \]
% or equivalently,
\[
\max\{L_1,L_2\}\leq h\leq \min\{U_1,U_2\}.
\]
This is exactly the scalar insertion step needed in the Fourier--Motzkin elimination. Thus the regular-interval construction works directly, and at the correct regularity, for each of the three cases $m=0,1,2$.

The regularity $m=2$ is the endpoint of this phenomenon. Nagel and Rudin \cite{NagelRudin1974} also exhibit $C^\infty$ data for which the corresponding $B^3$ Riesz decomposition problem has no solution, where
\[
B^3(\mathbb R)
=
\left\{
g\in C^2(\mathbb R):
g'''(x)\text{ exists for every }x\in\mathbb R
\text{ and }g'''\text{ is locally bounded}
\right\}.
\]
In particular,
\[
C^3(\mathbb R)\subset B^3(\mathbb R).
\]
Their example yields the obstruction used below for $m=3$, and therefore for every $m\geq 3$.

We now state the main result. For $x\in\mathbb R$, write
\begin{equation}
K_f(x)
=
\{v\in\mathbb R^M:A(x)v\leq f(x)\}.
\label{1.3}
\end{equation}

\begin{theorem}[Main theorem]\label{thm:main-theorem}
Let $M,N\geq 1$, and let
\[
A_{ij}:\mathbb R\to\mathbb R,
\qquad
1\leq i\leq N,\quad 1\leq j\leq M,
\]
be semialgebraic functions.

For each $m\in\{0,1,2\}$, there exist an integer $\bar m\geq m$ and a finite semialgebraic partition $\mathcal C$ of $\mathbb R$ with the following property. For every cell $C\in\mathcal C$, there are finitely many linear ordinary differential expressions
\begin{equation}
L_{C,\nu}f(x)
=
\sum_{i=1}^N
\sum_{r=0}^{\bar m}
a_{C,\nu ir}(x)f_i^{(r)}(x),
\qquad
\nu=1,\ldots,K_C,
\label{1.4}
\end{equation}
whose coefficients are semialgebraic on $C$, and a finite Boolean formula $\mathcal S_C$ in the sign conditions
\[
L_{C,\nu}f>0,
\qquad
L_{C,\nu}f=0,
\qquad
L_{C,\nu}f<0,
\]
such that, for every $f\in C^\infty(\mathbb R,\mathbb R^N)$, the following are equivalent:

\begin{enumerate}
\item[(i)] There exists
\[
F\in C^m(\mathbb R,\mathbb R^M)
\]
satisfying
\[
A(x)F(x)\leq f(x)
\qquad
(x\in\mathbb R).
\]

\item[(ii)] For every cell $C\in\mathcal C$ and every $x\in C$,
\[
\mathcal S_C
\bigl(
L_{C,1}f(x),\ldots,L_{C,K_C}f(x)
\bigr)
\]
holds.
\end{enumerate}

The partition, the coefficient functions, the Boolean formulas, and the integer $\bar m$ depend only on $A$, $M$, $N$, and $m$.

In contrast, for every $m\geq 3$, there exists a fixed constant-coefficient system for which no finite criterion depending pointwise on a fixed finite jet of the data can characterize the existence of a $C^m$ solution. In particular, no criterion of the form above exists in general for $m\geq 3$.
\end{theorem}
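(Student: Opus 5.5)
We split the argument into the positive part ($m\le 2$) and the negative part ($m\ge 3$).

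\textbf{Positive part.} First we choose the partition $\mathcal C$. By semialgebraic cell decomposition in one variable, $\mathcal C$ consists of finitely many exceptional points $x_1<\dots<x_k$ and the open intervals between them. We require that on each open interval every $A_{ij}$ is Nash (real analytic and semialgebraic) and the combinatorial type of $A(x)$ is constant: every minor has constant sign, so the rank and the pattern of linear dependencies among rows and columns do not change.

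On such an interval $I$ we run Fourier--Motzkin elimination symbolically, one variable $F_j$ at a time. Eliminating $F_M$ splits the rows into those with $A_{iM}>0$, those with $A_{iM}<0$, and those with $A_{iM}=0$. Dividing by $|A_{iM}|$, which is Nash and nonvanishing on $I$, gives upper bounds $U_p$ and lower bounds $L_q$ that are affine in $(F_1,\dots,F_{M-1})$ with Nash coefficients. The reduced system is $L_q\le U_p$ together with the untouched rows. To lift a solution of the reduced system back to $F_M$, we need a $C^m$ function $h$ with $\max_q L_q\le h\le\min_p U_p$. For finitely many bounds this follows from the two-function Nagel--Rudin interpolation property by induction, using the Riesz decomposition property. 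This is valid in $C^m(I)$ exactly for $m\le 2$.

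After eliminating every variable we are left with pointwise conditions $\sum_i c_i(x)f_i(x)\ge0$, with semialgebraic coefficients $c_i$, on $I$. These are zeroth-order conditions, and they give $\mathcal S_C$ on open cells.

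The genuine difficulty is gluing the interval solutions into one $C^m$ function across each exceptional point $x_0$. At $x_0$ the divisions by $A_{iM}$ degenerate, and $F$ must be $C^m$ through $x_0$. Our plan is as follows.
\begin{enumerate}
\item[(a)] Use Puiseux expansions of the $A_{ij}$ on each side of $x_0$. After the substitution $x-x_0=\pm t^{d}$, every coefficient becomes $t^{e}\cdot(\text{unit})$.
\item[(b)] Show that a $C^m$ solution near $x_0$ exists if and only if the $m$-jet $P$ of $F$ at $x_0$ can be chosen so that two things hold. First, the inequalities with $F$ replaced by $P$ hold to all orders that affect feasibility. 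Second, the ``remainder problem'' is solvable on each side. This remainder problem is again a system of linear inequalities with coefficients of the form $t^{e}\cdot(\text{unit})$, posed for a function that is $o(|x-x_0|^m)$.
\end{enumerate}
Choosing $P$ is a finite-dimensional linear feasibility problem in the Taylor coefficients of $f$ up to some order $\bar m$. By Tarski--Seidenberg (or linear Fourier--Motzkin over $\mathbb R$), feasibility of a parametric polyhedron is a finite Boolean combination of linear sign conditions in the parameters $f^{(r)}(x_0)$. This produces the $L_{C,\nu}$ and $\mathcal S_C$ at point cells.

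The hardest step is showing that the one-sided remainder problems contribute only finitely many jet conditions. This requires that feasibility near $x_0$ be decided by a finite jet of $f$, and that the one-sided interval solutions can be patched to $P$ with $C^m$ control. We would handle this by a weighted version of the Nagel--Rudin insertion, applied with the bounds multiplied by $|x-x_0|^{-m}$. The Puiseux data bound the orders that can matter, which lets us take $\bar m$ to be the maximum vanishing order appearing plus $m$. Finally, we combine the interval solutions with a partition of unity subordinate to the cells. Convexity of $K_f(x)$ keeps convex combinations admissible, and we only combine near points where both pieces share the jet $P$.

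\textbf{Negative part.} Fix $m\ge3$. Take the Nagel--Rudin $C^\infty$ data $L_1,L_2,U_1,U_2$ with $\max L\le\min U$ for which no $B^3$, hence no $C^3$, insertion exists. Taking antiderivatives $(m-3)$ times converts this to a $C^m$ obstruction. Encode it in the constant-coefficient scalar system $F\le U_1$, $F\le U_2$, $-F\le -L_1$, $-F\le -L_2$, so $f=(U_1,U_2,-L_1,-L_2)$.

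Now suppose a finite-jet pointwise criterion existed. It would have to hold everywhere for the Nagel--Rudin $f$, because near every point we can modify the data to a solvable configuration with the same jet at that point. Concretely, we exhibit for each $x$ a $C^\infty$ solvable $\tilde f$ whose $\bar m$-jet agrees with that of $f$ at $x$; for example, shift the bounds apart away from a neighborhood of $x$, or use the fact that the obstruction in their example is non-local, accumulating at a point. Since the criterion would then accept $f$ even though no $C^m$ solution exists, no such criterion can exist.
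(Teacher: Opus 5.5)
There is a genuine gap, and it sits in the core of the positive part. Your regular-interval step (Fourier--Motzkin plus finite Nagel--Rudin insertion), the elimination of $P$ by linear quantifier elimination, and the convex patching agree with the paper. What is missing is the analysis at the exceptional points.

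\textbf{No admissibility criterion, and no finite-jet proof.} You never state a precise criterion for which $m$-jets $P$ are admissible. You also never prove that admissibility depends only on a finite jet of $f$.
\begin{itemize}
\item Step (b) only restates the problem (``the remainder problem is solvable''). The phrase ``the inequalities with $F$ replaced by $P$ hold to all orders that affect feasibility'' is not a defined condition.
\item Its natural reading, $(A(x_0+t)P(x_0+t)-f(x_0+t))_+=o(t^m)$, is wrong. Take the rows $xF\leq 0$, $-xF\leq 0$ at $x_0=0$ with $m=0$. The residual test admits every constant $P=c$, yet continuity forces $c=0$. The underlying reason is that the error-bound constant of $A(x_0+t)$ blows up like a negative power of $t$.
\item The paper's criterion is $P(y)\in K_f(y)$ together with $\operatorname{dist}_\infty(P(y+t),K_f(y+t))=o(t^m)$.
\item This becomes finite through LP duality. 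Farkas' lemma and an extreme-point analysis of the normalized dual polyhedra give $\operatorname{dist}_\infty=\max\{0,\lambda_\alpha(t)^T(A(y+t)P(y+t)-f(y+t))\}$ for finitely many semialgebraic multipliers $\lambda_\alpha\geq 0$.
\item Only then does the rest go through. Taylor-expand $f$ to an order exceeding the pole order of the $\lambda_\alpha$ by more than $m+1$, and use a Puiseux order basis. This turns $o(t^m)$ into finitely many linear sign conditions in $(P,J_y^{(s)}f)$ and yields the gain $O(t^{m+\eta})$.
\item Your substitution $x-x_0=\pm t^d$ is also risky, because $C^m$ in $t$ does not imply $C^m$ in $x$ at $x_0$.
\end{itemize}

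\textbf{Sufficiency is not established.} You rightly name weighted insertion as a tool. However, ``bounds multiplied by $|x-x_0|^{-m}$'' does not give a $C^m$ remainder with zero $m$-jet, for three reasons.
\begin{itemize}
\item The Fourier--Motzkin bounds for the remainder involve the slack $f-AP$. This is typically not small, so it lies in no $X^m_\lambda$ with $\lambda>m$.
\item The slack has no Puiseux expansion, since $f$ is merely smooth.
\item Weight exactly $t^m$ only bounds the $m$-th derivative; it does not force it to tend to $0$.
\end{itemize}
The paper's construction runs as follows.
\begin{enumerate}
\item Replace $f$ by the semialgebraic majorant $f^{[L]}=T_Lf+C_Lt^{L+1}\mathbf 1$.
\item Use the power gain to pick a semialgebraic $R_0$ with $A(P+R_0)\leq f^{[L]}$ and $|R_0|\leq 2Ct^{m+\eta}$. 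By Puiseux, $R_0$ is $C^m$ with zero $m$-jet.
\item Use the semialgebraic error bound to obtain a pointwise correction inside a box $|E|\leq C't^{L+1-K}$.
\item Discard the rows whose (now semialgebraic) slack has order below $\theta=L-C_1$; these hold automatically on the box.
\item Apply weighted Fourier--Motzkin selection in $X^m_\theta$ to the remaining rows and the box, with $L$ so large that $\theta-D>m$.
\end{enumerate}

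\textbf{Negative part.} Your route is the paper's, but two repairs are needed.
\begin{itemize}
\item The antiderivative step is invalid, since a function between antiderivatives need not have its derivative between the original bounds. It is also unnecessary: $C^m\subset C^3$, and the lifts are $C^\infty$-solvable.
\item ``Shift the bounds apart away from a neighborhood of $x$'' fails at the accumulation point of the bumps, because every neighborhood of it is still obstructed. There you must lift by $(0,0,0)$ using flatness. On $\supp\psi_k$ you lift by the single bump, which has its explicit $C^\infty$ decomposition.
\end{itemize}
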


We take a simple example to illustrate the content of Theorem \ref{thm:main-theorem}. Suppose that $f\in C^\infty(\mathbb R)$, and consider the following inequalities for $x\in\mathbb R$:
\begin{equation}
\begin{cases}
x^2\mathbb{I}_{x\geq 0}F(x)\leq f(x)\leq x\mathbb{I}_{x\geq 0}F(x),\\
x\mathbb{I}_{x\leq 0}F(x)\leq f(x)\leq x^2\mathbb{I}_{x\leq 0}F(x),
\end{cases}
\label{intro3a}
\end{equation}
where the unknown $F$ is required to be continuous on $\mathbb R$. One
checks that a continuous solution $F$ exists if and only if $f$ satisfies
\begin{equation}
\begin{cases}
f(0)=0,\\
f'(0)\geq 0.
\end{cases}
\label{intro3}
\end{equation}

Note that the derivative $f'(0)$ enters into the criterion
\eqref{intro3}, even though we are seeking only a continuous solution $F$.

In dimension one, finite local differential characterization holds exactly for
\[
m=0,1,2.
\]

We describe the proof. Formal Fourier--Motzkin elimination applied to the fixed matrix $(A_{ij}(x))$ produces a finite semialgebraic decomposition of the line into regular intervals and exceptional points. On a regular interval, pointwise nonemptiness of $K_f(x)$ (see~\eqref{1.3}) implies the existence of a $C^m$ selection for each $m\in\{0,1,2\}$. Indeed, the Fourier--Motzkin induction reduces the problem to finitely many scalar lower and upper bounds in $C^m$, and the Nagel--Rudin weak-lattice property in $C^m(\mathbb R)$ supplies the required scalar insertion at the same regularity.

The essential issue is compatibility at an exceptional point. Fix an exceptional point $y$, a side $\varepsilon\in\{-,+\}$, and
\[
P\in\mathcal P^m(\mathbb R,\mathbb R^M),
\]
where $\mathcal P^m(\mathbb R,\mathbb R^M)$ denotes the space of $\mathbb R^M$-valued polynomials of degree at most $m$. Define
\begin{equation}
d^\pm_{y,P}(t)
=
\operatorname{dist}_\infty
\bigl(
P(y\pm t),K_f(y\pm t)
\bigr),
\qquad
t>0,
\label{1.5}
\end{equation}
where
\[
\operatorname{dist}_\infty(u,K)
=
\inf_{v\in K}\|u-v\|_\infty,
\qquad
\|w\|_\infty
=
\max_{1\leq j\leq M}|w_j|.
\]
Explicitly,
\[
d^\pm_{y,P}(t)
=
\inf_{v\in K_f(y\pm t)}
\max_{1\leq j\leq M}
|P_j(y\pm t)-v_j|.
\]

For $m\in\{0,1,2\}$, we prove that $P$ is the $m$-jet of a feasible one-sided $C^m$ selection if and only if
\[
P(y)\in K_f(y)
\text{\quad and \quad }
d^\pm_{y,P}(t)=o(t^m)
\qquad
(t\downarrow 0).
\]
For $m=0$, the condition $o(t^0)$ means simply $o(1)$.

Linear-programming duality expresses the distance as the maximum of finitely many positive parts of linear expressions in $P$ and $f(y\pm t)$. One-variable Puiseux asymptotics and Taylor's theorem then turn
\[
d^\pm_{y,P}(t)=o(t^m)
\]
into finitely many homogeneous linear sign conditions on $P$ and a finite jet of $f$. Eliminating the finite-dimensional jet parameter $P$ gives the exceptional-point criterion.

For the converse direction in the endpoint theorem, the distance condition first produces a small semialgebraic correction to a relaxed Taylor system. A semialgebraic error bound gives a pointwise correction inside a sufficiently small box. Weighted Fourier--Motzkin elimination and the weighted version of the Nagel--Rudin scalar insertion theorem in $C^m$ then produce a genuine $C^m$ correction on the punctured interval with enough decay to extend to the endpoint with zero $m$-jet. The argument is carried out uniformly for $m=0,1,2$, with the endpoint order $m$ and the final weighted order chosen strictly larger than $m$.

Finally, on each regular interval we patch an interior feasible $C^m$ selection to the one-sided endpoint selections by convex combinations. At every exceptional point, the same polynomial $m$-jet is used on the two sides, so the pieces join to a global $C^m$ solution.

The paper is organized as follows. Section \ref{sec:2} treats regular intervals and $C^m$ scalar insertion for $m=0,1,2$. Section \ref{sec:3} develops the one-sided semialgebraic finite-jet principle in the form needed for these three regularities. Section \ref{sec:4} proves the one-sided $C^m$ endpoint criterion and the finite exceptional-point conditions. Section \ref{sec:5} assembles these results and proves the positive part of Theorem \ref{thm:main-theorem} for $m=0,1,2$. Section \ref{sec:6} proves the sharpness of the regularity threshold by showing that finite local differential characterization fails for every $m\geq 3$.

\paragraph{Acknowledgment.}
The first author is supported by a grant from the City University of Hong Kong (Project No.\ 7200844). The second author is supported in part by NSF grant 2247429, the Chancellor's Fellowship at UC Davis, and the Simons Gift Fund.

\section{Regular intervals and \texorpdfstring{$C^m$}{Cm} selection} \label{sec:2}

Throughout this section, fix
\[
m\in\{0,1,2\}.
\]
We isolate the part of the proof that takes place away from the finitely many exceptional points of the fixed semialgebraic coefficient matrix. The arguments in this section are uniform in $m$. The only regularity input is the one-dimensional Riesz decomposition, or equivalently weak-lattice interpolation, property of $C^m(\mathbb R)$ for $m=0,1,2$.

\subsection{Value polyhedra and Fourier--Motzkin elimination}

For the system
\[
A(x)F(x)\leq f(x),
\]
define
\[
K_f(x)=\{v\in\mathbb R^M:A(x)v\leq f(x)\}.
\]
Thus $F$ solves the system on a set $E\subset\mathbb R$ if and only if
\[
F(x)\in K_f(x)
\qquad
(x\in E).
\]

\begin{lemma}[Linear projection]\label{lemma:2.1}
Let $z\in\mathbb R^p$, $u\in\mathbb R^q$, and consider a finite system
\[
B(x)z+C(x)u\leq d
\]
whose coefficient functions are semialgebraic in $x$. After a finite semialgebraic partition of the $x$-line, the condition
\[
\exists u\in\mathbb R^q
\qquad
B(x)z+C(x)u\leq d
\]
is equivalent to a finite system of linear inequalities in $(z,d)$. All projected coefficients are semialgebraic and depend only on the original coefficient functions.
\end{lemma}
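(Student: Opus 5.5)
We will prove Lemma~\ref{lemma:2.1} by induction on $q$, eliminating one coordinate of $u$ at a time by parametric Fourier--Motzkin elimination. The inductive statement is slightly stronger than the lemma. After a finite semialgebraic partition of the line, on each cell the projected system has the form $\tilde B(x)z\leq \tilde D(x)d$. Here $\tilde B$ and $\tilde D$ are matrices of semialgebraic functions on the cell, and the number of rows is bounded in terms of the original number of inequalities $N$ and of $q$. The base case $q=0$ is trivial: take the trivial partition and $\tilde B=B$, $\tilde D=I$.

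\textbf{Inductive step.} Write $u=(u',u_q)$ and let $c_i(x)$ denote the coefficient of $u_q$ in row $i$. Each $c_i$ is semialgebraic, so the sets $\{c_i>0\}$, $\{c_i=0\}$, $\{c_i<0\}$ are finite unions of points and open intervals. Refining by all of them gives a finite semialgebraic partition of the line. On each cell the index set splits into three fixed sets $I_+$, $I_0$, $I_-$ according to the sign of $c_i$.

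On such a cell we divide row $i\in I_\pm$ by $|c_i(x)|$, which is a semialgebraic operation on the cell. The existence of $u_q$ is then equivalent to two families of conditions. First, the rows in $I_0$ are kept. Second, for every pair $(i,k)\in I_+\times I_-$ we require
\[
\frac{d_k-(\text{rest}_k)}{c_k(x)}\leq\frac{d_i-(\text{rest}_i)}{c_i(x)},
\]
that is, every lower bound is at most every upper bound. Multiplying by $c_i(x)\,|c_k(x)|>0$ turns this into a linear inequality in $(z,u',d)$ with semialgebraic coefficients.

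This equivalence is the elementary fact that a finite family of lower bounds and a finite family of upper bounds on a real number are compatible if and only if each lower bound is at most each upper bound. If $I_+$ or $I_-$ is empty, the corresponding pair condition is vacuous. The new system has at most $|I_0|+|I_+||I_-|$ rows, and its right-hand side is still a linear combination of the $d_i$ with semialgebraic coefficients. We then apply the induction hypothesis in the variable $u'$ on each cell and take a common refinement of the partitions.

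\textbf{Main obstacle: semialgebraicity.} The only point needing care is that all coefficients remain semialgebraic and that the partition is genuinely finite. This follows from closure of semialgebraic functions under products, sums, and quotients by nonvanishing functions, together with the fact that sign sets of one-variable semialgebraic functions are finite unions of intervals and points. The coefficient matrices and partition are built from $B$ and $C$ alone, with $d$ treated as a free variable throughout, so the projected coefficients depend only on the original coefficient functions, as the lemma claims.
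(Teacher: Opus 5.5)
Your proposal is correct and follows the paper's proof essentially step for step: eliminate one coordinate of $u$ at a time, partition the line by the signs of its semialgebraic coefficients, pair every lower bound with every upper bound, clear the fixed-sign denominators, and iterate. Your strengthened inductive form $\tilde B(x)z\leq\tilde D(x)d$ only makes explicit the bookkeeping that the paper leaves inside ``iterating this procedure'': the right-hand sides become semialgebraic combinations of $d$, and the cells must be refined in common.
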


\begin{proof}
Eliminate one coordinate $s$ of $u$ at a time. Write the inequalities involving $s$ as
\[
a_\alpha(x)s\leq b_\alpha(x,z,d).
\]
Partition the line according to the signs of the finitely many semialgebraic functions $a_\alpha$. On each cell, every $a_\alpha$ is either identically zero or has fixed nonzero sign. A row with $a_\alpha>0$ gives an upper bound for $s$, a row with $a_\alpha<0$ gives a lower bound for $s$, and a row with $a_\alpha\equiv0$ gives an inequality independent of $s$.

Existence of $s$ is equivalent to requiring that every lower bound be no larger than every upper bound, together with all inequalities coming from the zero rows. Since the signs of the denominators are fixed on the cell, denominators may be cleared without changing the direction of the corresponding inequalities. The resulting coefficients are again semialgebraic. Iterating this procedure eliminates all coordinates of $u$ and gives the desired finite projected system.
\end{proof}

\begin{corollary}[Pointwise feasibility]\label{cor:2.2}
After a finite semialgebraic partition of $\mathbb R$, the condition
\[
K_f(x)\neq\varnothing
\]
is equivalent, on each cell, to finitely many homogeneous linear inequalities in $f(x)$, with semialgebraic coefficients determined by $A$.
\end{corollary}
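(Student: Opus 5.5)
The plan is to deduce Corollary~\ref{cor:2.2} directly from Lemma~\ref{lemma:2.1} by eliminating every unknown. Take $p=0$ (no free variable $z$), $q=M$, $u=v\in\mathbb R^M$, $C(x)=A(x)$ and $d=f(x)$. Then the condition $K_f(x)\neq\varnothing$ is exactly
\[
\exists v\in\mathbb R^M \qquad A(x)v\leq d,\quad d=f(x).
\]
Lemma~\ref{lemma:2.1} supplies a finite semialgebraic partition of the $x$-line. On each cell, this existence statement is equivalent to a finite system of linear inequalities in $d$ alone, with semialgebraic coefficients depending only on $A$. Substituting $d=f(x)$ at each point $x$ of the cell then gives the claimed description.

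It remains to check that the resulting inequalities are \emph{homogeneous}, i.e.\ of the form $\sum_i c_{k,i}(x)d_i\leq 0$, with no constant terms. I would verify this by induction along the elimination in the proof of Lemma~\ref{lemma:2.1}. Initially every row $\sum_j A_{ij}(x)v_j\leq d_i$ is homogeneous jointly in $(v,d)$. At each step we eliminate one coordinate $s$, and there are three kinds of new rows:
\begin{itemize}
\item zero rows, which are carried over unchanged;
\item for each pair of a lower bound from a row with $a_\beta<0$ and an upper bound from a row with $a_\alpha>0$, the combination $|a_\beta|\cdot(\text{row }\alpha)+a_\alpha\cdot(\text{row }\beta)$, which cancels $s$;
\item nothing else.
\end{itemize}
Combinations of the second kind are positive combinations with semialgebraic weights on the cell (denominators cleared, as in the lemma). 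A positive linear combination of homogeneous rows is again homogeneous, so homogeneity is preserved at every step. If on some cell $s$ has only upper bounds or only lower bounds, no new rows are created and $s$ is simply free. After $M$ steps only rows in $d$ remain, of the form $\sum_i c_{k,i}(x)d_i\leq 0$. If no rows remain, the condition is vacuous, which is also a (trivially) homogeneous system.

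The one point needing mild care is the partition. Each elimination step refines the partition according to the signs of the new coefficients of the next variable. These coefficients are finite sums and products of semialgebraic functions, hence semialgebraic, so each of the finitely many steps refines a finite semialgebraic partition by a finite one. The final partition and the coefficients $c_{k,i}$ therefore depend only on $A$, not on $f$. I do not expect a genuine obstacle. The content is simply the bookkeeping that Fourier--Motzkin elimination preserves both semialgebraicity and homogeneity, together with the observation that the equivalence holds pointwise in $x$ for every right-hand side $d$, and in particular for $d=f(x)$.
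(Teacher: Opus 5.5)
Your proposal is correct and follows essentially the same route as the paper. Both arguments run the Fourier--Motzkin elimination of Lemma~\ref{lemma:2.1} formally on $A(x)v\leq b$ with $b$ an independent parameter, note that each step forms only positive combinations of rows and therefore preserves homogeneity in $b$, and then substitute $b=f(x)$; your explicit induction on homogeneity and on the refinement of the partition just spells out what the paper states in one line.
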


\begin{proof}
Apply the preceding lemma to
\[
\exists v\in\mathbb R^M
\qquad
A(x)v\leq f(x).
\]
All operations in Fourier--Motzkin elimination are homogeneous in the right-hand side $f(x)$.

More explicitly, perform Fourier--Motzkin elimination formally on
\[
A(x)v\leq b,
\qquad
b\in\mathbb R^N,
\]
treating $b$ as an independent parameter. Only finitely many semialgebraic coefficient functions of $x$ occur, and the resulting projected conditions are homogeneous linear inequalities in $b$. Substituting $b=f(x)$ gives the assertion.
\end{proof}

\begin{definition}[Regular intervals]\label{def:exceptional set}
Let $\Sigma_A\subset\mathbb R$ be a finite set such that, on every connected component of $\mathbb R\setminus\Sigma_A$, every coefficient arising in the formal Fourier--Motzkin elimination is smooth and either identically zero or of fixed nonzero sign. The components of $\mathbb R\setminus\Sigma_A$ are called the \emph{regular intervals}; the points of $\Sigma_A$ are called the \emph{exceptional points}.
\end{definition}

Such a finite set exists by one-dimensional semialgebraicity and depends only on $A$ \cite{BochnakCosteRoy1998}. Indeed, only finitely many semialgebraic coefficient functions arise in the formal elimination, and each may be partitioned into finitely many intervals and points on which it is smooth and either identically zero or has fixed nonzero sign.

\subsection{The \texorpdfstring{$C^m$}{Cm} scalar insertion theorem}

We use the following consequence of the Riesz decomposition theorem of Nagel and Rudin~\cite{NagelRudin1974}. For $m=0,1,2$, the space $C^m(\mathbb R)$ has the Riesz decomposition property (RD) (see \cite{NagelRudin1974} for the precise definition). Their appendix shows that, for an additive group of real-valued functions, the Riesz decomposition property is equivalent to the weak-lattice property.

\begin{theorem}[Nagel--Rudin interpolation] \label{thm:2.4}
Fix $m\in\{0,1,2\}$. Let
\[
h_1,h_2,H_1,H_2\in C^m(\mathbb R)
\]
satisfy
\[
h_i\leq H_j
\qquad
(i,j=1,2).
\]
Then there exists $\phi\in C^m(\mathbb R)$ such that
\[
h_i\leq\phi\leq H_j
\qquad
(i,j=1,2).
\]
Equivalently,
\[
\max\{h_1,h_2\}
\leq
\phi
\leq
\min\{H_1,H_2\}.
\]
The same statement holds on any open interval $I\subset\mathbb R$.
\end{theorem}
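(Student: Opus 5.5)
The plan is to reduce Theorem~\ref{thm:2.4} to a partition-of-unity gluing of local interpolants, treating $m=0,1,2$ uniformly but handling the delicate points by hand. Set $\ell=\max\{h_1,h_2\}$ and $u=\min\{H_1,H_2\}$, so $\ell\le u$. Both $\ell$ and $u$ are continuous, and $\ell$ is locally the maximum of two $C^m$ functions. The natural first candidate is $\phi=\tfrac12(\ell+u)$ wherever this is already $C^m$. In general it is not, and the failure is concentrated near the \emph{crossing set} $Z=\{h_1=h_2\}\cup\{H_1=H_2\}$.

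The first step is to localize. If $\phi_\alpha\in C^m(I_\alpha)$ satisfy $\ell\le\phi_\alpha\le u$ on the sets $I_\alpha$ of a locally finite open cover, and $\{\chi_\alpha\}$ is a smooth partition of unity subordinate to it, then $\phi=\sum\chi_\alpha\phi_\alpha$ is $C^m$. It also satisfies $\ell\le\phi\le u$, because the constraint set at each point is an interval and hence convex. So it suffices to find, for every $x_0$, a neighborhood carrying a $C^m$ interpolant. If $\ell(x_0)<u(x_0)$, there is nothing to do: by continuity, a constant (or any smooth function) squeezed strictly between $\ell$ and $u$ works on a small neighborhood. The same reasoning applies on any open interval $I$.

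The second step handles a point $x_0$ where $\ell(x_0)=u(x_0)$. Without loss of generality $h_1(x_0)=\ell(x_0)=u(x_0)=H_1(x_0)$. Since $h_1\le u\le H_1$ and $h_1\le H_1$ with equality at $x_0$, the $C^m$ functions $H_j-h_i\ge0$ all vanish at $x_0$ for those pairs with $h_i(x_0)=H_j(x_0)$. A nonnegative $C^1$ function vanishing at a point has zero derivative there, so for $m\ge1$ all active $h_i,H_j$ share the same $1$-jet at $x_0$. The inactive ones are strictly separated near $x_0$ and can be dropped. This reduces matters to the fully active case, where I will write $h_i=g+a_i$ and $H_j=g+b_j$ with $g$ the common Taylor polynomial. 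For $m=0$ the construction is trivial, since $\phi=\tfrac12(\ell+u)$ is continuous. For $m=1$, I will try $\phi=g+\psi$, where $\psi$ is built from $\max\{a_1,a_2\}$ and $\min\{b_1,b_2\}$, which are $o(|x-x_0|)$. The plan is to regularize $\tfrac12(\max a+\min b)$ by mollifying at a scale comparable to $|x-x_0|$ (a Whitney-type smoothing), keeping the result inside the band while controlling derivatives by the local oscillation.

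The main obstacle is $m=2$. Here second derivatives of the active functions need not agree at $x_0$. Moreover, the band $[\ell,u]$ may pinch along a sequence of points accumulating at $x_0$, which defeats the naive localization because the neighborhoods are not uniform. I would follow the Nagel--Rudin strategy. On the open set $\{\ell<u\}$, written as a countable union of intervals, I will choose local interpolants. The goal is that on each component $(\alpha,\beta)$ whose endpoints lie in the pinch set $\{\ell=u\}$, the interpolant has prescribed $2$-jet at $\alpha$ and $\beta$, namely the jet forced by the active functions. The required $C^2$ estimate is a Whitney-type bound $|\phi''(x)-h_i''(x_0)|\to0$ as $x\to x_0$. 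I expect to obtain it via a Glaeser-type inequality $|w'|^2\le 2w\,\sup|w''|$ applied to the nonnegative $C^2$ differences $H_j-h_i$, which bounds first-derivative discrepancies by the square root of the gap width. This square-root control is exactly what works at order two and fails at order three, consistent with the $B^3$ counterexample. Once this local jet-compatibility lemma is established, the gluing step above assembles the global interpolant, on $\mathbb R$ or on any open interval $I$.
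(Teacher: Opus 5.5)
\textbf{There is a genuine gap: for $m=1$ the proposed mechanism does not keep the interpolant in the band, and for $m=2$ the decisive construction is never given.}

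\textbf{What is correct, and where the real difficulty lies.} Your reductions are sound as far as they go. Since the admissible set at each point is an interval, a smooth partition of unity glues local interpolants. The case $\ell(x_0)<u(x_0)$ is trivial, and $m=0$ is settled globally by $\tfrac12(\ell+u)$. If only one lower or only one upper function is active at a pinch point, that function is itself a local interpolant. So the only hard points are those where $h_1,h_2,H_1,H_2$ all coincide. At such a point, however, the pinch set $Z=\{\ell=u\}$ may accumulate. The ``local'' problem is then no easier than the global one, and this is exactly where your argument stops being a proof.

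\textbf{The case $m=1$.} Mollifying $\tfrac12(\max a+\min b)$ at scale $\sim|x-x_0|$ does not keep the result in the band. At every nearby pinch point $x_1$ the interpolant is forced to equal $\ell(x_1)$ with derivative $h_i'(x_1)$, and an average over a window of positive width generally destroys this. Even a Whitney scale $\sim\operatorname{dist}(x,Z)$ fails. For example, if $H_1-h_1\sim(x-x_1)^4$ while $h_1''(x_1)\neq0$, the mollification error $\sim d^2$ swamps the band width $\sim d^4$ at distance $d$. At order one the argument can be repaired by smoothing only the isolated transversal crossings of $h_1,h_2$ and of $H_1,H_2$. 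These lie in $\{\ell<u\}$, and their derivative jumps tend to zero at accumulation points.

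\textbf{The case $m=2$.} Here the missing step is the substance of the theorem. At a pinch point only the $1$-jet of an interpolant is forced. Its second derivative is merely constrained to $[\max_i h_i''(x_0),\min_j H_j''(x_0)]$ over the active indices, and this interval is typically nondegenerate, as your own remark that active second derivatives need not agree shows. So there is no ``jet forced by the active functions'' to prescribe at the endpoints of the components of $\{\ell<u\}$.

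What must actually be done is to select these second derivatives along $Z$ so that arbitrarily short components with arbitrarily thin bands can be filled by $C^2$ functions whose second derivatives converge at accumulation points of $Z$. That is precisely what Nagel and Rudin prove. Glaeser's inequality only bounds $|(H_j-h_i)'|$ by the square root of the gap; it does not supply this construction. Deferring it to ``the Nagel--Rudin strategy'' and an unproved ``local jet-compatibility lemma'' leaves the statement unproved.

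\textbf{How the paper handles it.} The paper does not reprove this either. It treats $m=0$ via the lattice property of $C^0$ and cites Nagel--Rudin's theorem that $C^1(\mathbb R)$ and $C^2(\mathbb R)$ have the Riesz decomposition property. It then uses their appendix to convert Riesz decomposition into exactly the two-lower/two-upper interpolation statement, and transfers to an open interval $I$ by a smooth diffeomorphism $\mathbb R\to I$. If you rely on Nagel--Rudin, you need that equivalence step explicitly, since their theorem is a decomposition result, not an interpolation result.
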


\begin{proof}
For $m=0$, the space of continuous real-valued functions on any topological space is an RD-space. For $m=1,2$ in one dimension, Nagel and Rudin prove that $C^m(\mathbb R)$ is an RD-space. Their appendix proves that, within the class of additive groups of real-valued functions, the RD property is equivalent to the weak-lattice property. The latter is exactly the two-lower/two-upper interpolation statement above.

For an arbitrary open interval $I$, choose a $C^\infty$ diffeomorphism $\Psi:\mathbb R\to I$. Composition with $\Psi$ carries $C^m(I)$ functions to $C^m(\mathbb R)$ functions and preserves pointwise inequalities. Apply the result on $\mathbb R$ and then compose the interpolant with $\Psi^{-1}$.
\end{proof}

\begin{lemma}[Finite $C^m$ scalar insertion]\label{lem:2.5}
Fix $m\in\{0,1,2\}$. Let $I\subset\mathbb R$ be an open interval, and let
\[
L_1,\ldots,L_p,U_1,\ldots,U_q\in C^m(I)
\]
satisfy
\[
L_i(x)\leq U_j(x)
\qquad
(x\in I,\ 1\leq i\leq p,\ 1\leq j\leq q).
\]
Then there exists $h\in C^m(I)$ such that
\[
L_i\leq h\leq U_j
\qquad
(1\leq i\leq p,\ 1\leq j\leq q).
\]
If one of the two families is empty, only the inequalities from the other family are required.
\end{lemma}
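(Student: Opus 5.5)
We reduce the finite family to repeated applications of Theorem~\ref{thm:2.4}, which handles two lower and two upper bounds. The approach is a double induction that first collapses the lower family into a single $C^m$ function and then the upper family. It relies on the weak-lattice property giving the following \emph{one-sided merge}. Suppose $h_1,h_2\in C^m(I)$ and $H\in C^m(I)$ satisfy $h_1,h_2\leq H$. Then there is $\phi\in C^m(I)$ with $\max\{h_1,h_2\}\leq\phi\leq H$; this is Theorem~\ref{thm:2.4} with $H_1=H_2=H$.

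\textbf{Step 1: empty families.} If $q=0$, take $h=\max_i L_i+$ something smooth? That is not $C^m$ in general. Instead take $h=\sum_i |L_i|$-type bounds? Those are also not $C^m$. The correct choice is $h=\sum_{i=1}^p(L_i+ G_i)$, where each $G_i\in C^m(I)$ satisfies $G_i\geq 0$ and $G_i\geq -L_i$ pointwise... but that is circular. Simplest: use Theorem~\ref{thm:2.4} with no upper constraint, replaced by a large smooth majorant. Every continuous function on $I$ is dominated by some $C^\infty$ function $\Phi$ (take a smooth function above a continuous bound $\max_i L_i$, built from local maxima on an exhausting sequence of compacts and a partition of unity). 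Then set $U_1:=\Phi$ and reduce to the case $q\geq1$. The case $p=0$ is symmetric, using a smooth minorant of $\min_j U_j$.

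\textbf{Step 2: induction on $p$ with $q$ arbitrary.} It suffices to show the following: if $p\geq2$, we can replace $L_1,L_2$ by a single $\tilde L\in C^m(I)$ with $\max\{L_1,L_2\}\leq\tilde L\leq U_j$ for all $j$. Then any $h$ lying between $\tilde L,L_3,\dots,L_p$ and the $U_j$ works, and induction reduces to $p=1$. By the symmetric argument applied to $-h$, the upper family is likewise reduced to $q=1$. When $p=q=1$, $h=L_1$ suffices.

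\textbf{Step 3: the merge against many uppers (main obstacle).} We must produce $\tilde L$ with $\max\{L_1,L_2\}\leq \tilde L\leq \min_j U_j$ while Theorem~\ref{thm:2.4} only allows two uppers. We do this by a nested induction on $q$. First merge the uppers into one: by induction on $q$ with two lowers fixed, find $\tilde U\in C^m(I)$ with $\max\{L_1,L_2\}\leq\tilde U\leq\min\{U_1,\dots,U_q\}$. Concretely, for $q\geq 3$, apply the induction hypothesis (the case of two lowers and $q-1$ uppers) to the lowers $L_1,L_2$ and uppers $U_1,\dots,U_{q-2}$ together with a function $V$. Here $V$ is obtained from Theorem~\ref{thm:2.4} with lowers $L_1,L_2$ and uppers $U_{q-1},U_q$. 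This step needs care: we must check that $V$ satisfies only what is needed, and the inductive statement should be phrased as ``two lowers, $k$ uppers'' and proved by induction on $k$. Here $V$ is sandwiched as $\max\{L_1,L_2\}\leq V\leq\min\{U_{q-1},U_q\}$, so any interpolant between $L_1,L_2$ and $U_1,\dots,U_{q-2},V$ is automatically below all $U_j$. The base cases $k=1,2$ are Theorem~\ref{thm:2.4}. Symmetrically, ``$k$ lowers, one upper'' follows by the same induction applied to negatives. Combining the two inductions gives the full $(p,q)$ statement. The only genuine analytic input is Theorem~\ref{thm:2.4} on the open interval $I$. Step 1 is the only place where an auxiliary smooth majorant or minorant must be constructed, and it is routine.
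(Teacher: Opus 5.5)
Your argument is correct and follows essentially the paper's route: a double induction that reduces everything to the two-lower/two-upper case of Theorem~\ref{thm:2.4}. You first handle two lowers against arbitrarily many uppers, then merge the lowers one pair at a time. The only ordering difference is that you merge $U_{q-1},U_q$ before invoking the inductive hypothesis, whereas the paper invokes it first and then merges the resulting interpolant with $U_r$; this is immaterial. Your smooth-majorant device for an empty family legitimately makes precise the paper's brief remark that the one-sided case follows ``by the same induction with no upper restrictions,'' but you should delete the false starts in Step~1 and state the case $p=q=0$ (take $h=0$) explicitly.
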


\begin{proof}
The empty-family cases are immediate. If both families are empty, take $h=0$. If only the lower family is nonempty, take, for example, $h=L_1$ when $p=1$, and obtain the general finite case by the same induction below with no upper restrictions. The upper-only case is analogous.

Assume now that both families are nonempty. The Nagel--Rudin interpolation theorem gives the interpolation property for two lower and two upper functions. We show that this implies the finite interpolation property by induction.

First fix two lower functions $L_1,L_2$. With one upper function $U_1$, the assertion is immediate: one may take $h=U_1$, since $L_1,L_2\leq U_1$. Suppose that for some $r\geq2$ there is an interpolant $h_{r-1}\in C^m(I)$ satisfying
\[
L_1,L_2\leq h_{r-1}
\]
and
\[
h_{r-1}\leq U_1,\ldots,U_{r-1}.
\]
Apply the two-lower/two-upper interpolation theorem to the lower functions $L_1,L_2$ and the upper functions $h_{r-1},U_r$. Since
\[
L_i\leq h_{r-1}
\qquad\text{and}\qquad
L_i\leq U_r
\]
for $i=1,2$, there exists $h_r\in C^m(I)$ such that
\[
L_1,L_2\leq h_r\leq h_{r-1},U_r.
\]
Because $h_r\leq h_{r-1}\leq U_j$ for $j<r$, the function $h_r$ lies below $U_1,\ldots,U_r$. Thus two lower functions can be interpolated below any finite family of upper functions.

Now induct on the number of lower functions. Suppose $h_0\in C^m(I)$ lies above
\[
L_1,\ldots,L_{p-1}
\]
and below every $U_j$. Apply the two-lower/finite-upper case just proved to the lower functions $h_0,L_p$ and the upper family $U_1,\ldots,U_q$. The resulting function $h$ satisfies
\[
h_0,L_p\leq h\leq U_j
\qquad
(1\leq j\leq q).
\]
Since $h_0\geq L_1,\ldots,L_{p-1}$, it follows that
\[
L_i\leq h\leq U_j
\]
for every $i,j$. This completes the induction.
\end{proof}

\subsection{\texorpdfstring{$C^m$}{Cm} selection on a regular interval}

\begin{proposition}[$C^m$ selection on regular intervals]\label{pro:2.6}
Fix $m\in\{0,1,2\}$. Let $I$ be a regular interval in the sense of Definition \ref{def:exceptional set}. Suppose
\[
K_f(x)\neq\varnothing
\qquad
(x\in I).
\]
Then there exists
\[
F\in C^m(I,\mathbb R^M)
\]
such that
\[
A(x)F(x)\leq f(x)
\qquad
(x\in I).
\]
\end{proposition}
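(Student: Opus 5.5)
The proof goes by induction on the number of unknowns $M$, following the Fourier--Motzkin elimination that defines the regular intervals. We view the system as an iterated projection. Write $v=(v_1,\ldots,v_M)$ and eliminate the coordinates in the order $v_M,v_{M-1},\ldots,v_1$. By Lemma~\ref{lemma:2.1} and Definition~\ref{def:exceptional set}, for each $k$ this produces a projected system $\mathcal S_k(x)$. It consists of finitely many linear inequalities in $(v_1,\ldots,v_k)$ and $f(x)$, with coefficients smooth on $I$ and of constant sign (or identically zero) there. Moreover, $(v_1,\ldots,v_k)$ satisfies $\mathcal S_k(x)$ if and only if it extends to some $(v_1,\ldots,v_{k+1})$ satisfying $\mathcal S_{k+1}(x)$. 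Here $\mathcal S_M$ is the original system and $\mathcal S_0$ is the pointwise feasibility system of Corollary~\ref{cor:2.2}. The hypothesis $K_f(x)\neq\varnothing$ for $x\in I$ says exactly that $\mathcal S_0(x)$ holds on $I$.

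We construct $F_1,\ldots,F_M\in C^m(I)$ one at a time. The invariant is that $(F_1(x),\ldots,F_k(x))$ satisfies $\mathcal S_k(x)$ for every $x\in I$. Suppose $F_1,\ldots,F_{k}$ have been constructed. In $\mathcal S_{k+1}(x)$, the rows with positive $v_{k+1}$-coefficient $a_\alpha(x)$ give upper bounds
\[
U_\alpha(x)=a_\alpha(x)^{-1}\Bigl(b_\alpha(x)\cdot f(x)-\textstyle\sum_{j\leq k}c_{\alpha j}(x)F_j(x)\Bigr).
\]
The rows with negative coefficient give lower bounds $L_\beta$ in the same way, and the zero rows are exactly rows of $\mathcal S_k$. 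On $I$ the coefficients are smooth and the $a_\alpha$ do not vanish, and $f\in C^\infty$, $F_j\in C^m$. Hence every $L_\beta$ and $U_\alpha$ lies in $C^m(I)$. The Fourier--Motzkin construction makes $\mathcal S_k(x)$ contain, after clearing the positive denominators, all inequalities $L_\beta(x)\le U_\alpha(x)$. So the invariant yields $L_\beta\le U_\alpha$ pointwise on $I$. Lemma~\ref{lem:2.5} then gives $F_{k+1}\in C^m(I)$ with $L_\beta\le F_{k+1}\le U_\alpha$, including the cases where one family is empty. Since the signs of the $a$'s are fixed, multiplying back shows that $(F_1,\ldots,F_{k+1})$ satisfies every row of $\mathcal S_{k+1}$. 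At $k=M$ this gives $A(x)F(x)\le f(x)$ on $I$.

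The main point needing care is bookkeeping rather than analysis. We must check that the system $\mathcal S_k$ produced by the elimination contains exactly the pairwise comparisons $L_\beta\le U_\alpha$ together with the zero rows. We must also check that the regularity of the coefficients on $I$ is what Definition~\ref{def:exceptional set} guarantees: smoothness and non-vanishing of every pivot coefficient at every stage of the elimination, not merely of the original $A_{ij}$. We also use that the regular intervals are open, so that Lemma~\ref{lem:2.5} applies on $I$ directly. Multiplication by the smooth functions $a_\alpha^{-1}$ and $c_{\alpha j}$ preserves $C^m(I)$, which is where the smoothness in the definition is used. All the genuine regularity input is concentrated in the scalar insertion Lemma~\ref{lem:2.5}, i.e.\ in Theorem~\ref{thm:2.4}.
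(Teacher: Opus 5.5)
Your proposal is correct and is essentially the paper's proof. The paper inducts on $M$: it projects onto the first $M-1$ coordinates by Fourier--Motzkin, applies the induction hypothesis to the projected system, and then inserts the last coordinate between the resulting $C^m$ bounds via Lemma~\ref{lem:2.5}. That is exactly your forward construction, with the elimination chain precomputed. Unrolling it has one small advantage: it makes explicit that the projected right-hand sides are combinations $b_\alpha(x)\cdot f(x)$ with coefficients regular on $I$. The paper's induction hypothesis, stated for systems $A(x)F\le f(x)$, passes over this point silently.
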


\begin{proof}
We induct on $M$. The assertion is trivial when $M=0$.

Assume the proposition has been proved for $M-1$. Write $v = (v',s)$ with $v' \in \R^{M-1}$ 
and write the $i$-th inequality as
\[
a_i(x)s+c_i(x)\cdot v'\leq f_i(x).
\]
Because $I$ is a regular interval, each coefficient $a_i$ is either identically zero on $I$ or has fixed nonzero sign there.

Rows with $a_i>0$ give upper bounds
\[
s\leq U_i(x,v')
:=
\frac{f_i(x)-c_i(x)\cdot v'}{a_i(x)},
\]
while rows with $a_j<0$ give lower bounds
\[
s\geq L_j(x,v')
:=
\frac{f_j(x)-c_j(x)\cdot v'}{a_j(x)}.
\]
Rows with $a_i\equiv0$ give inequalities involving only $v'$.

Together with the zero rows, the cross inequalities
\[
L_j(x,v')\leq U_i(x,v')
\]
for every lower bound $L_j$ and every upper bound $U_i$ are exactly the Fourier--Motzkin projection of the original system to the first $M-1$ variables. Since the original system is pointwise feasible on $I$, this projected system is also pointwise feasible on $I$.

By the induction hypothesis, the projected system has a selection $F'\in C^m(I,\mathbb R^{M-1})$.
Substituting $v'=F'(x)$ into the scalar bounds gives finitely many functions
\[
L_j(x):=L_j(x,F'(x)),
\qquad
U_i(x):=U_i(x,F'(x)).
\]
These functions belong to $C^m(I)$. Indeed, $f$ is $C^\infty$, the fixed semialgebraic coefficients are smooth on the regular interval, every nonzero denominator $a_i$ is nowhere zero on $I$, and $F'$ is $C^m$.

Because $F'$ satisfies the exact Fourier--Motzkin projection, every lower bound is no larger than every upper bound:
\[
L_j(x)\leq U_i(x)
\]
for all $x\in I$ and for every applicable pair $(j,i)$. Lemma \ref{lem:2.5} therefore gives a function
$F_M\in C^m(I)$
such that
\[
L_j\leq F_M\leq U_i
\]
for all lower and upper bounds. If one of the two families is empty, the corresponding one-sided version of Lemma \ref{lem:2.5} applies.

Set $F=(F',F_M)$.
The zero rows are satisfied because $F'$ satisfies the projected system, and all rows involving the last coordinate are satisfied because $F_M$ lies between every lower and upper bound. Hence
\[
A(x)F(x)\leq f(x)
\qquad
(x\in I).
\]
Thus $F$ is the required feasible $C^m$ selection.
\end{proof}

\section{One-dimensional semialgebraic limits} \label{sec:3}

This section puts together the one-sided semialgebraic facts used at exceptional points and develops the finite-jet test needed uniformly for $C^m$ solvability when
\[
m\in\{0,1,2\}.
\]
The basic Puiseux and order statements are independent of $m$. The dependence on $m$ enters only in the comparison with the order $t^m$.

Fix $x_0\in\mathbb R$. We use $\mathscr G^+_{x_0}$ to denote the ring of right-hand semialgebraic germs at $x_0$. For the standard theory of one-sided semialgebraic germs and their
identification with algebraic Puiseux series, we refer to
\cite[Section~3.3]{BasuPollackRoy2006}.

\begin{lemma}[One-sided Puiseux order]\label{lem:3.1}
Let $\phi\in\mathscr G^+_{x_0}$ be nonzero. Then there exist $c\neq0$ and $\gamma\in\mathbb Q$ such that
\[
\phi(x)
=
c(x-x_0)^\gamma
+
o\bigl((x-x_0)^\gamma\bigr)
\]
as $x\downarrow x_0$. In particular, the sign of $\phi(x)$ is eventually constant as $x\downarrow x_0$.
\end{lemma}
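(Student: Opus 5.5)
The plan is to reduce the statement to the structure of one-variable semialgebraic germs via the identification with algebraic Puiseux series cited from \cite[Section~3.3]{BasuPollackRoy2006}. After the translation $t=x-x_0$, a right-hand semialgebraic germ $\phi\in\mathscr G^+_{x_0}$ becomes a germ $\psi(t)=\phi(x_0+t)$ at $0^+$. This germ is semialgebraic, so its graph over a small interval $(0,\delta)$ lies in the zero set of a nonzero polynomial $Q(t,y)\in\mathbb R[t,y]$. By the standard theory (Newton--Puiseux over the real closed field $\mathbb R$, together with the fact that a semialgebraic germ eventually coincides with one continuous branch of $Q=0$), there exist $\delta>0$ and an integer $q\geq1$ such that
\[
\psi(t)=\sum_{k\geq k_0}c_k t^{k/q},
\qquad 0<t<\delta,
\]
where the series converges and the coefficients $c_k$ are real. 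One first shrinks $\delta$ so that $\psi$ is continuous on $(0,\delta)$ and its graph avoids the finitely many singular points of $Q$ near $t=0$; this uses the monotonicity/cell decomposition for one-variable semialgebraic functions.

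Given the series representation, the conclusion is routine. Since $\phi$ is a nonzero germ, $\psi$ does not vanish identically on any interval $(0,\delta')$. Hence not all $c_k$ are zero, because a convergent Puiseux series with all coefficients zero is identically zero. Let $k_1$ be the least index with $c_{k_1}\neq0$, and set $c=c_{k_1}$ and $\gamma=k_1/q\in\mathbb Q$. Then
\[
\psi(t)-ct^\gamma=t^{\gamma}\sum_{k>k_1}c_k t^{(k-k_1)/q}.
\]
The last sum tends to $0$ as $t\downarrow0$, by convergence of the series on a neighbourhood. This gives $\phi(x)=c(x-x_0)^\gamma+o((x-x_0)^\gamma)$. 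Finally, for $t$ small enough the error term is bounded by $|c|t^\gamma/2$, so $\operatorname{sign}\phi(x)=\operatorname{sign}c$ eventually.

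The main obstacle is the first step: justifying that the germ is represented by a single convergent real Puiseux series rather than merely satisfying an algebraic equation. I would handle it by appealing directly to the cited identification of $\mathscr G^+_{x_0}$ with the field of algebraic Puiseux series, which is a ring isomorphism preserving eventual sign. Under that identification the nonzero germ corresponds to a nonzero series, and the leading term gives $c$ and $\gamma$.

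There is an alternative that avoids convergence entirely. One uses that the field of germs is real closed and ordered by eventual sign, with the valuation given by the leading exponent. Then $\phi\cdot(x-x_0)^{-\gamma}$ has a nonzero finite limit $c$. This limit exists by the monotonicity theorem, and it is nonzero and finite because the valuation of this germ is $0$.
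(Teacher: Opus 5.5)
Your proposal is correct and follows the paper's proof. Both reduce to the Puiseux expansion of a one-variable semialgebraic germ (the cited Basu--Pollack--Roy result), take $c$ and $\gamma$ from the first nonzero term, and read off the eventual sign from $(x-x_0)^\gamma>0$. Your translation to $t=x-x_0$ and your use of tail convergence to get the $o\bigl((x-x_0)^\gamma\bigr)$ bound just fill in what the paper leaves to the citation. One caveat on your valuation-theoretic alternative: it tacitly uses that the value group is $\mathbb Q$ and is realized by powers $t^\gamma$, which is essentially the same Puiseux fact.
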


\begin{proof}
This is the standard Puiseux expansion theorem for one-variable semialgebraic functions \cite[Section~3.3]{BasuPollackRoy2006}. After shrinking the right-hand interval, a nonzero semialgebraic function admits a Puiseux expansion in a fractional power of $x-x_0$. The first nonzero term gives the coefficient $c$ and the exponent $\gamma$. Since $x-x_0>0$, the factor $(x-x_0)^\gamma$ is positive for $x>x_0$ sufficiently close to $x_0$. Therefore the eventual sign is the sign of $c$.
\end{proof}

For a nonzero germ $\phi\in\mathscr G^+_{x_0}$, we define its Puiseux order by
\begin{equation}
\operatorname{ord}_{x_0}(\phi)=\gamma,
\label{eq3.1}
\end{equation}
where $\gamma$ is the exponent in Lemma \ref{lem:3.1}. We also set
\[
\operatorname{ord}_{x_0}(0)=+\infty.
\]
In view of Lemma \ref{lem:3.1}, we refer to the sign of the leading coefficient $c$ as the eventual sign of $\phi$ as $x\downarrow x_0$.

\begin{lemma}[Finite order basis]\label{lem:3.2}
Let $V\subset\mathscr G^+_{x_0}$
be a finite-dimensional real vector space of semialgebraic right-hand germs. Then there exist germs
\[
e_1,\ldots,e_s\in V
\]
and rational numbers
\[
\gamma_1<\gamma_2<\cdots<\gamma_s
\]
such that every $\phi\in V$ can be written uniquely as
\[
\phi=\sum_{\mu=1}^s c_\mu e_\mu.
\]
Moreover, if $\phi\neq0$, the eventual sign of $\phi$ is the sign of the first nonzero coefficient $c_\mu$.

Equivalently, there exist linear functionals
\[
\lambda_1,\ldots,\lambda_s:V\to\mathbb R
\]
such that, for every $\phi\in V$, either $\phi=0$, or the eventual sign of $\phi$ is determined by the first nonzero number among
\[
\lambda_1(\phi),\ldots,\lambda_s(\phi).
\]
\end{lemma}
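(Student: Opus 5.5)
The plan is to build the basis by a Gaussian-elimination (echelon) argument with respect to the Puiseux order of Lemma \ref{lem:3.1}. The statement as written omits it, but the intended property is that $\operatorname{ord}_{x_0}(e_\mu)=\gamma_\mu$; with that normalization both the uniqueness and the sign claim follow quickly.

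\textbf{Step 1: finiteness of the set of orders.} By Lemma \ref{lem:3.1}, every nonzero $\phi\in V$ has a well-defined order and a nonzero leading coefficient $c(\phi)$. I will show that the set $\{\operatorname{ord}_{x_0}(\phi):0\neq\phi\in V\}$ has at most $\dim V$ elements. Suppose $\phi_1,\dots,\phi_k$ are nonzero with pairwise distinct orders $\gamma_1<\cdots<\gamma_k$, and let $\sum a_\mu\phi_\mu=0$ be a relation. Let $\mu_0$ be the smallest index with $a_{\mu_0}\neq0$. Then
\[
\sum a_\mu\phi_\mu=a_{\mu_0}c(\phi_{\mu_0})(x-x_0)^{\gamma_{\mu_0}}+o\bigl((x-x_0)^{\gamma_{\mu_0}}\bigr),
\]
which is not the zero germ. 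So germs with distinct orders are linearly independent, and therefore at most $\dim V$ distinct orders occur.

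\textbf{Step 2: the basis.} List the distinct orders as $\gamma_1<\cdots<\gamma_s$ and choose $e_\mu\in V$ with $\operatorname{ord}_{x_0}(e_\mu)=\gamma_\mu$. Rescale so that the leading coefficient is $1$; then
\[
e_\mu=(x-x_0)^{\gamma_\mu}+o\bigl((x-x_0)^{\gamma_\mu}\bigr).
\]
These germs are independent by Step 1, so it remains to show that they span $V$. Let $\phi\in V$ be nonzero, so its order equals some $\gamma_{\mu}$, and set $\phi_1=\phi-c(\phi)e_{\mu}$. Then $\phi_1\in V$, and either $\phi_1=0$ or $\operatorname{ord}_{x_0}(\phi_1)>\gamma_\mu$, which means its order is some later $\gamma_{\mu'}$ with $\mu'>\mu$. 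Since there are only $s$ orders, iterating this terminates after at most $s$ steps and expresses $\phi$ in the span of the $e_\mu$. In particular $s=\dim V$, and the coefficients $c_\mu$ are unique.

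\textbf{Step 3: eventual sign.} Take $\phi=\sum c_\mu e_\mu\neq0$ and let $\mu_0$ be the first index with $c_{\mu_0}\neq0$. The asymptotic computation from Step 1 gives
\[
\phi=c_{\mu_0}(x-x_0)^{\gamma_{\mu_0}}+o\bigl((x-x_0)^{\gamma_{\mu_0}}\bigr).
\]
By Lemma \ref{lem:3.1}, the eventual sign of $\phi$ is therefore $\operatorname{sign}(c_{\mu_0})$. For the equivalent formulation, let $\lambda_\mu$ be the coordinate functionals with respect to the basis $e_1,\dots,e_s$; these are linear by uniqueness of the coordinates.

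\textbf{Main obstacle.} The only delicate point is Step 1: proving that only finitely many orders occur. This is exactly what makes the reduction in Step 2 terminate, since a priori a vector space could contain germs of infinitely many rational orders. The independence argument in Step 1 settles this cleanly, and after that Steps 2 and 3 are formal.
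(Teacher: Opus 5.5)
Your proof is correct and is essentially the paper's argument: both are an echelon construction with respect to Puiseux order, resting on the fact that leading terms of distinct orders cannot cancel. The difference is only organizational. You bound the set of occurring orders up front (distinct-order germs are independent) and pick one representative per order. The paper instead takes the minimal order in $W_\mu$, passes to $W_{\mu+1}=\ker\alpha_\mu$, and gets termination from the drop in dimension.
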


\begin{proof}
We inductively construct a basis using the Puiseux order \eqref{eq3.1}.

Set
\[
W_1=V.
\]
If $W_1=\{0\}$, there is nothing to prove. Otherwise, choose a basis
\[
\{\psi_1,\ldots,\psi_d\}
\]
of $W_1$. Since the finite set
\[
\{\operatorname{ord}_{x_0}(\psi_1),\ldots,\operatorname{ord}_{x_0}(\psi_d)\}
\]
has a minimum, and since
\[
\operatorname{ord}_{x_0}(\phi+\psi)
\geq
\min\{
\operatorname{ord}_{x_0}(\phi),
\operatorname{ord}_{x_0}(\psi)
\},
\]
no linear combination of the $\psi_j$ can have order smaller than this minimum. Hence
\[
\{
\operatorname{ord}_{x_0}(\phi):
\phi\in W_1,\ \phi\neq0
\}
\]
has a minimum, and we set
\[
\gamma_1
:=
\min
\{
\operatorname{ord}_{x_0}(\phi):
\phi\in W_1,\ \phi\neq0
\}.
\]

Choose $e_1\in W_1$ with
\[
\operatorname{ord}_{x_0}(e_1)=\gamma_1.
\]
After multiplying $e_1$ by a nonzero scalar, we may assume
\[
e_1(x)
=
(x-x_0)^{\gamma_1}
+
o\bigl((x-x_0)^{\gamma_1}\bigr).
\]
Define
\[
\alpha_1(\phi)
=
\lim_{x\downarrow x_0}
\frac{\phi(x)}{(x-x_0)^{\gamma_1}},
\qquad
\phi\in W_1.
\]
This limit exists and is finite. It is the leading coefficient if
\[
\operatorname{ord}_{x_0}(\phi)=\gamma_1,
\]
and it is zero if
\[
\operatorname{ord}_{x_0}(\phi)>\gamma_1.
\]
The map
\[
\alpha_1:W_1\to\mathbb R
\]
is linear, and
\[
\alpha_1(e_1)=1.
\]

Set
\[
W_2=\ker\alpha_1.
\]
Then every nonzero element of $W_2$ has order strictly larger than $\gamma_1$. Repeating the same construction inside $W_2$, then inside $W_3$, and so on, we obtain a decreasing sequence
\[
V=W_1\supset W_2\supset\cdots\supset W_s\supset W_{s+1}=\{0\},
\]
germs
\[
e_\mu\in W_\mu,
\qquad
\mu=1,\ldots,s,
\]
and rational numbers
\[
\gamma_1<\gamma_2<\cdots<\gamma_s
\]
such that
\[
W_\mu=\mathbb R e_\mu\oplus W_{\mu+1}.
\]
The process stops after finitely many steps because
\[
\dim W_{\mu+1}<\dim W_\mu
\]
at each nontrivial step. Hence
\[
V
=
\mathbb R e_1\oplus\cdots\oplus\mathbb R e_s.
\]
Therefore every $\phi\in V$ has a unique expansion
\[
\phi=\sum_{\mu=1}^s c_\mu e_\mu.
\]

If $\phi\neq0$, let $\mu_0$ be the first index such that $c_{\mu_0}\neq0$. All terms with index larger than $\mu_0$ have strictly larger order, so
\[
\phi(x)
=
c_{\mu_0}(x-x_0)^{\gamma_{\mu_0}}
+
o\bigl((x-x_0)^{\gamma_{\mu_0}}\bigr).
\]
Since $x-x_0>0$, the eventual sign of $\phi$ is the sign of $c_{\mu_0}$. Taking $\lambda_\mu$ to be the coordinate functional
\[
\lambda_\mu(\phi)=c_\mu
\]
gives the equivalent formulation.
\end{proof}

\subsection{Finite-jet tests for \texorpdfstring{$o(t^m)$}{o(tm)}}

Fix
\[
m\in\{0,1,2\}.
\]
Let $z\in\mathbb R^q$ be a finite-dimensional auxiliary parameter.

\begin{lemma}[Finite-jet positive-part test]\label{lem:3.3}
Fix $y\in\mathbb R$. Suppose
\begin{equation}
\Lambda(t;z,f)
=
\sum_{\alpha=1}^q a_\alpha(t)z_\alpha
+
\sum_{i=1}^N b_i(t)f_i(y+t),
\label{3.2}
\end{equation}
where the $a_\alpha$ and $b_i$ are right-hand semialgebraic germs at $0$. Then there is an integer $s\geq0$, depending only on $m$ and the coefficient germs, such that
\begin{equation}
[\Lambda(t;z,f)]_+
=
o(t^m)
\qquad
(t\downarrow0)
\label{3.3}
\end{equation}
is equivalent to a finite Boolean combination of homogeneous linear sign conditions in
\[
z
\quad\text{and}\quad
J_y^{(s)}f.
\]

Moreover, there is a rational number $\eta>0$, depending only on $m$ and the coefficient germs, such that whenever \eqref{3.3} holds,
\begin{equation}
[\Lambda(t;z,f)]_+
=
O(t^{m+\eta}).
\label{3.4}
\end{equation}
\end{lemma}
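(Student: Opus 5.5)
Write $f_i(y+t)=\sum_{r=0}^{s}\frac{f_i^{(r)}(y)}{r!}t^r+R_i(t)$ with $R_i(t)=O(t^{s+1})$, since $f$ is $C^\infty$. Then
\[
\Lambda(t;z,f)=\Lambda_s(t;z,w)+\sum_i b_i(t)R_i(t),\qquad w=J^{(s)}_yf,
\]
where $\Lambda_s(t;z,w)=\sum_\alpha a_\alpha(t)z_\alpha+\sum_{i,r}b_i(t)\frac{t^r}{r!}w_{i,r}$. The function $\Lambda_s$ depends linearly on the finite-dimensional parameter $(z,w)$, and its coefficient germs $a_\alpha$ and $t^rb_i$ are semialgebraic. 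Let $\beta=\min_i\operatorname{ord}_0(b_i)$; this is finite unless all $b_i\equiv0$, in which case no Taylor expansion is needed and we take $s=0$. Fix a rational $\eta_0>0$, say $\eta_0=1$, and choose the integer $s$ with $s+1+\beta\ge m+\eta_0+1$. Then the remainder satisfies $\sum_i b_iR_i=O(t^{s+1+\beta})=o(t^{m+\eta_0})$. Hence, for every $0<\eta\le\eta_0$, condition \eqref{3.3} is equivalent to $[\Lambda_s]_+=o(t^m)$, and \eqref{3.4} for $\Lambda$ is equivalent to \eqref{3.4} for $\Lambda_s$. This reduces the lemma to a purely semialgebraic, finite-dimensional statement.

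Now let $V$ be the span of the germs $a_\alpha(t)$ and $t^rb_i(t)$, and apply Lemma~\ref{lem:3.2} to $V$. This gives the basis $e_1,\dots,e_p$ with orders $\gamma_1<\dots<\gamma_p$ and the coordinate functionals $\lambda_\mu$. For $\phi=\Lambda_s(\cdot;z,w)\in V$, each coordinate $c_\mu=\lambda_\mu(\phi)$ is a fixed linear form in $(z,w)$, obtained by expanding each spanning germ in the basis. Suppose $\phi\ne0$ with first nonzero coordinate $c_{\mu_0}$. Then $\phi\sim c_{\mu_0}t^{\gamma_{\mu_0}}$, so $[\phi]_+$ is eventually either $\equiv0$ (when $c_{\mu_0}<0$) or asymptotic to $c_{\mu_0}t^{\gamma_{\mu_0}}$ (when $c_{\mu_0}>0$). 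Therefore
\[
[\phi]_+=o(t^m)\iff \text{for every }\mu\text{ with }\gamma_\mu\le m:\ \bigl(c_1=\dots=c_{\mu-1}=0\bigr)\Rightarrow c_\mu\le 0.
\]
The right-hand side is a finite Boolean combination of the homogeneous linear sign conditions $c_\mu\gtrless0$ and $c_\mu=0$ in $(z,J^{(s)}_yf)$. It also covers the case $\phi=0$ automatically.

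For the improvement \eqref{3.4}, take
\[
\eta=\min\Bigl(\eta_0,\ \min\{\gamma_\mu-m:\gamma_\mu>m\}\Bigr),
\]
with the convention that the inner minimum is $+\infty$ if no $\gamma_\mu$ exceeds $m$. This $\eta$ is rational and positive, and it depends only on $m$ and the coefficient germs, since the $\gamma_\mu$ do. If \eqref{3.3} holds, there are two cases. Either $\phi$ is eventually $\le0$, so $[\phi]_+=0$ near $0$. Or the first nonzero coordinate has $\gamma_{\mu_0}>m$, so $|\phi|=O(t^{\gamma_{\mu_0}})=O(t^{m+\eta})$. Adding back the remainder, which is $O(t^{m+\eta_0})$, gives \eqref{3.4}. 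Here we use $[a+b]_+\le[a]_++|b|$.

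The main subtlety is that $s$ and $\eta$ must be independent of $(z,f)$. This is ensured because the Taylor remainder bound is uniform in $f$ only in its exponent, which is all we need since $f$ is fixed in each asymptotic statement. It is also ensured because $V$, and hence the orders $\gamma_\mu$, are determined by the coefficient germs and $s$ alone. One must also check the degenerate case $\gamma_\mu=m$ exactly. There a positive leading coefficient gives $[\phi]_+\sim c\,t^m$, which is not $o(t^m)$, so the condition must indeed include $\gamma_\mu\le m$ and not only $\gamma_\mu<m$.
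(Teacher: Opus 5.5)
Your proof is correct and follows essentially the same route as the paper. You truncate the Taylor expansion of $f$ at an order $s$ chosen so that the remainder, weighted by the coefficient germs, has order strictly above $m$; the paper uses a bound $a_\alpha,b_i=O(t^{-K})$ where you use $\min_i\operatorname{ord}_0(b_i)$. You then apply Lemma~\ref{lem:3.2} to the span of the $a_\alpha$ and $t^rb_i$, read off the criterion from the first nonzero coordinate of order at most $m$, and take $\eta$ no larger than both the gap to the next order above $m$ and the remainder's gain, exactly as the paper does, with your explicit Boolean formula and the inequality $[a+b]_+\le[a]_++|b|$ making the reduction to the Taylor truncation slightly cleaner.
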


\begin{proof}
Choose $K\geq0$ such that every coefficient germ
\begin{equation}
    a_\alpha, \ b_i = O(t^{-K}).
    \label{eq:abOK}
\end{equation}
Choose $s$ so large that
\[
s-K>m+1.
\]
Taylor's theorem gives
\[
f_i(y+t)
=
\sum_{r=0}^s
\frac{f_i^{(r)}(y)}{r!}t^r
+
o(t^s).
\]
Thanks to \eqref{eq:abOK},
the Taylor remainder contributes
\[
b_i(t)o(t^s)
=
o(t^{s-K})
=
o(t^{m+1}).
\]
Therefore
\[
\Lambda(t;z,f)
=
\Phi\bigl(t;z,J_y^{(s)}f\bigr)
+
o(t^{m+1}),
\]
where $\Phi$ belongs to the finite-dimensional real span of the semialgebraic germs
\[
a_\alpha(t)
\quad\text{and}\quad
b_i(t)t^r,
\qquad
0\leq r\leq s.
\]

Apply Lemma \ref{lem:3.2} to this finite-dimensional space. We obtain rational exponents
\[
\gamma_1<\cdots<\gamma_R
\]
and homogeneous linear functionals $L_1,\ldots,L_R$ of $\bigl(z,J_y^{(s)}f\bigr)$
such that, if $\mu$ is the first index for which $L_\mu\neq0$, then
\[
\Phi(t)
=
L_\mu t^{\gamma_\mu}
+
o(t^{\gamma_\mu}).
\]

We compare the first nonzero order with $m$.

Suppose first that
\[
\gamma_\mu\leq m.
\]
If $L_\mu>0$, then
\[
[\Lambda(t;z,f)]_+
\sim
L_\mu t^{\gamma_\mu},
\]
and this is not $o(t^m)$. If $L_\mu<0$, then
\[
\Lambda(t;z,f)<0
\]
for all sufficiently small $t>0$, because the remainder $o(t^{m+1})$ has strictly larger order than $t^{\gamma_\mu}$. Hence
\[
[\Lambda(t;z,f)]_+=0
\]
eventually, and \eqref{3.3} holds.

Thus, whenever the first nonzero order is at most $m$, condition \eqref{3.3} holds exactly when the corresponding first nonzero coefficient is negative.

Suppose instead that every coefficient corresponding to an order at most $m$ vanishes. Then either the first nonzero order is strictly larger than $m$, or $\Phi\equiv0$. In either case
\[
[\Lambda(t;z,f)]_+=o(t^m).
\]
Consequently \eqref{3.3} is equivalent to a finite Boolean combination of the homogeneous linear sign conditions
\[
L_\rho<0,
\qquad
L_\rho=0,
\qquad
L_\rho>0.
\]

It remains to prove the uniform power gain. If the first nonzero order is at most $m$, the criterion forces its coefficient to be negative, and therefore the positive part is eventually zero.

Otherwise all nonzero orders that can govern a positive leading term lie strictly above $m$. Since the list
\[
\gamma_1,\ldots,\gamma_R
\]
is finite, there is a positive gap between $m$ and the first order in this list that is larger than $m$. Choose a rational number $\eta>0$ smaller than this gap and also smaller than $1$. If there is no order larger than $m$, choose any rational $\eta$ with
\[
0<\eta<1.
\]
The term $\Phi$ is then $O(t^{m+\eta})$ whenever its first nonzero order is larger than $m$, while the Taylor remainder
\[
o(t^{m+1})
\]
is also $O(t^{m+\eta})$. Hence
\[
[\Lambda(t;z,f)]_+
=
O(t^{m+\eta}),
\]
which proves \eqref{3.4}.
\end{proof}

\begin{corollary}[Finite families]\label{cor:3.4}
Fix $m\in\{0,1,2\}$ and $y\in\mathbb R$. For a finite family of expressions
\[
\Lambda_\rho(t;z,f)
\]
of the form \eqref{3.2}, there exist a common finite jet order $s$ and a common rational number $\eta>0$ such that the simultaneous conditions
\[
[\Lambda_\rho(t;z,f)]_+
=
o(t^m)
\qquad
(t\downarrow0)
\]
are equivalent to a finite Boolean combination of homogeneous linear sign conditions in
$\bigl(z,J_y^{(s)}f\bigr)$,
and imply
\[
[\Lambda_\rho(t;z,f)]_+
=
O(t^{m+\eta})
\]
for every $\rho$.
\end{corollary}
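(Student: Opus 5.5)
The plan is to apply Lemma~\ref{lem:3.3} separately to each member of the finite family $\Lambda_1,\ldots,\Lambda_P$ and then take worst-case parameters. For each $\rho$, Lemma~\ref{lem:3.3} provides a jet order $s_\rho$ and a rational $\eta_\rho>0$. These depend only on $m$ and on the coefficient germs of $\Lambda_\rho$. It also provides a finite Boolean combination $\mathcal B_\rho$ of homogeneous linear sign conditions in $\bigl(z,J^{(s_\rho)}_y f\bigr)$ that is equivalent to $[\Lambda_\rho]_+=o(t^m)$. Set
\[
s=\max_\rho s_\rho,\qquad \eta=\min_\rho \eta_\rho .
\]
The minimum of finitely many positive rationals is a positive rational, so $\eta$ is admissible.

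Next, each $\mathcal B_\rho$ is transported to the common jet order $s$. Any homogeneous linear functional of $J^{(s_\rho)}_y f$ becomes a homogeneous linear functional of $J^{(s)}_y f$ once the higher derivatives are given zero coefficients. Hence each $\mathcal B_\rho$ is also a Boolean combination of homogeneous linear sign conditions in $\bigl(z,J^{(s)}_y f\bigr)$.

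Alternatively, one could rerun the proof of Lemma~\ref{lem:3.3} directly with a single $K$ bounding all coefficient germs $a_{\rho\alpha},b_{\rho i}$ of all members, and a single $s$ with $s-K>m+1$. Lemma~\ref{lem:3.2} would then be applied to each span separately. Either route gives the same conclusion.

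The simultaneous conditions are then equivalent to the conjunction $\bigwedge_\rho \mathcal B_\rho$, which is again a finite Boolean combination. For the power gain, assume all the conditions hold. Lemma~\ref{lem:3.3} gives $[\Lambda_\rho]_+=O(t^{m+\eta_\rho})$ for each $\rho$. Since $\eta\leq\eta_\rho$ and $t\leq 1$ near $0$, we have $t^{m+\eta_\rho}\leq t^{m+\eta}$, and therefore $[\Lambda_\rho]_+=O(t^{m+\eta})$ for every $\rho$.

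There is no real obstacle. The only point needing care is the monotonicity in the exponent: a smaller $\eta$ gives a weaker bound, which is exactly why the minimum over $\rho$ is the correct common choice.
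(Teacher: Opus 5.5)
Your proposal is correct and follows the paper's proof essentially verbatim: apply Lemma~\ref{lem:3.3} to each $\Lambda_\rho$, take $s=\max_\rho s_\rho$ (padding lower-order formulas with zero coefficients), take $\eta=\min_\rho \eta_\rho$, and form the conjunction of the resulting Boolean conditions. Your remark that $t^{m+\eta_\rho}\leq t^{m+\eta}$ for $0<t\leq 1$ just spells out why the minimum is the right common gain.
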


\begin{proof}
Apply Lemma \ref{lem:3.3} to each member of the finite family. Take $s$ to be the maximum of the finitely many jet orders, adding zero coefficients for the unused higher derivatives, and take $\eta$ to be the minimum of the finitely many positive rational gains. The conjunction of the resulting finite Boolean sign conditions gives the desired common criterion.
\end{proof}

\begin{remark}[Left-hand germs]
All statements in this section have identical left-hand versions after replacing $y+t$ by $y-t$. In particular, for each $m\in\{0,1,2\}$, the condition
\[
[\Lambda_\rho(t;z,f)]_+=o(t^m)
\]
on the left is again finite Boolean-linear in a finite jet of $f$ at $y$, with the same type of positive power gain.
\end{remark}

\section{\texorpdfstring{$C^m$}{Cm} endpoint compatibility}\label{sec:4}

Throughout this section, fix
\[
m\in\{0,1,2\}.
\]
Fix an exceptional point $y\in\Sigma_A$. We characterize those polynomial $m$-jets that can be realized by feasible one-sided $C^m$ selections.

\subsection{Distance to the feasible polyhedron}

Suppose a regular interval lies immediately to the right of $y$. For $P\in\mathcal P^m(\mathbb R,\mathbb R^M)$, define (cf. \eqref{1.5})
\begin{equation}
d^+_{y,P}(t)
=
\operatorname{dist}_\infty
\bigl(
P(y+t),K_f(y+t)
\bigr),
\qquad
t>0.
\label{eq:distance-plus}
\end{equation}
Define $d^-_{y,P}$ similarly using $y-t$.

\begin{lemma}[Dual formula for the one-point distance]\label{lem:4.1}
Fix an exceptional point $y\in\Sigma_A$. Assume
\[
K_f(y+t)
:=
\{v\in\mathbb R^M:A(y+t)v\leq f(y+t)\}
\neq\varnothing
\]
for all sufficiently small $t>0$. Then, after shrinking the interval, there exist finitely many semialgebraic functions
\[
\lambda_\alpha(t)\in[0,\infty)^N,
\qquad
\alpha=1,\ldots,L,
\]
depending only on the germ of $A$, such that
\[
d^+_{y,P}(t)
=
\max
\{
0,
\Lambda_1(t;P,f),
\ldots,
\Lambda_L(t;P,f)
\},
\]
where
\[
\Lambda_\alpha(t;P,f)
:=
\lambda_\alpha(t)^T
\bigl(
A(y+t)P(y+t)-f(y+t)
\bigr).
\]
\end{lemma}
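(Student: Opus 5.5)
The plan is to compute the distance by linear programming duality at each fixed $t$, and then to make the dual vertices semialgebraic in $t$. Fix $t>0$ small and write $B=A(y+t)$, $b=f(y+t)$, $u=P(y+t)$. The distance can be written as the linear program
\[
d^+_{y,P}(t)=\min\{\delta\geq0:\ \exists w\in\mathbb R^M,\ -\delta\mathbf 1\leq w\leq \delta\mathbf 1,\ B(u+w)\leq b\}.
\]
This program is feasible because $K_f(y+t)\neq\varnothing$, and its value is bounded below by $0$. By strong duality, $d^+_{y,P}(t)$ equals the maximum of the dual objective over the dual polyhedron. Eliminating the multipliers of the box constraints, I expect the dual to read: maximize $\lambda^T(Bu-b)$ over $\lambda\geq0$ subject to $\|B^T\lambda\|_1\leq1$. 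The key structural point is that this dual feasible set
\[
D(t)=\{\lambda\in[0,\infty)^N:\ \|A(y+t)^T\lambda\|_1\leq 1\}
\]
depends only on $A$, and not on $P$ or $f$.

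There is one caveat. $D(t)$ can be unbounded when $\ker B^T\cap[0,\infty)^N\neq\{0\}$. Primal feasibility, that is $K_f\neq\varnothing$, means that by Farkas' lemma $\lambda^T(Bu-b)=-\lambda^Tb\leq0$ on those recession directions. So the maximum is attained at a vertex of $D(t)$, and the dual value is $\max(0,\max_{\text{vertices}}\lambda^T(Bu-b))$. The $0$ comes from $\lambda=0$, which is itself a vertex; including it explicitly is harmless.

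Next I need the vertices to be given by finitely many semialgebraic functions of $t$. To linearize the $\ell^1$ constraint, I would lift to variables $(\lambda,\sigma)$ with
\[
\lambda\geq0,\qquad -\sigma\leq B^T\lambda\leq\sigma,\qquad \mathbf 1^T\sigma\leq1,
\]
or equivalently write $D(t)$ as the set of $\lambda\geq0$ with $\epsilon^TB^T\lambda\leq1$ for all sign vectors $\epsilon\in\{\pm1\}^M$. In the second description $D(t)$ is a polyhedron cut out by finitely many inequalities whose coefficients are semialgebraic in $t$. Its vertices are solutions of square subsystems obtained by choosing $N$ of the constraints as active. For each choice $\alpha$ of an active set, Cramer's rule gives a candidate $\lambda_\alpha(t)$ as a ratio of determinants, whenever the determinant is nonzero. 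These are semialgebraic functions.

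I would then shrink the interval $(0,t_0)$ so that, by Lemma~\ref{lem:3.1} applied to the finitely many determinants and the finitely many feasibility slacks, each determinant is either identically zero or nonvanishing. With the same shrinking, each candidate $\lambda_\alpha(t)$ is either feasible for every $t\in(0,t_0)$ or infeasible for every such $t$. Keep only the $\alpha$ that are eventually nonsingular and feasible. The vertex set of $D(t)$ is then exactly $\{\lambda_\alpha(t)\}$ for all small $t$, since every vertex arises from some nonsingular active set. Also, every feasible candidate is a point of $D(t)$, so including non-vertex candidates does not increase the maximum. This gives the formula with $\Lambda_\alpha=\lambda_\alpha^T(A P-f)$.

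I expect the main obstacle to be the duality step when $D(t)$ is unbounded or degenerate: I must justify that the maximum over vertices equals the supremum. I would handle this by invoking the Minkowski decomposition $D=\operatorname{conv}(\text{vertices})+\operatorname{cone}$. The polyhedron is pointed because $\lambda\geq0$, so it has vertices. Farkas' lemma, applied via primal feasibility, shows the objective is nonpositive on the recession cone, so the maximum over $D(t)$ is attained at a vertex.
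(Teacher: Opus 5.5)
Your proposal is correct and follows essentially the paper's argument. Both proofs use duality (the paper uses Farkas' lemma and normalizes to $\|A^T\lambda\|_1=1$), bound the objective on recession directions using $K_f\neq\varnothing$, and enumerate vertices by active sets with Cramer's rule and semialgebraic shrinking. The differences are cosmetic: you use the single pointed polyhedron $\{\lambda\geq0:\|A^T\lambda\|_1\leq1\}$, cut out by $2^M$ sign-vector inequalities, instead of the paper's sign-pattern pieces $Q_\sigma(t)$, and you explicitly make feasibility of each candidate constant on the shrunken interval, which the paper leaves implicit.
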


\begin{proof}
Fix $t>0$ sufficiently small, and abbreviate
\[
A=A(y+t),
\qquad
p=P(y+t),
\qquad
f=f(y+t),
\]
and
\[
K:=\{v\in\mathbb R^M:Av\leq f\}.
\]
By assumption, $K\neq\varnothing$. Set
\[
d:=\operatorname{dist}_\infty(p,K).
\]

For $s\geq0$, the inequality $d\leq s$ is equivalent to the existence of $v\in K$ such that
\[
\|v-p\|_\infty\leq s.
\]
Writing
\[
v=p+r,
\]
this is equivalent to
\begin{equation}
Ar\leq f-Ap,
\qquad
r\leq s\mathbf 1,
\qquad
-r\leq s\mathbf 1.
\label{eq:farkas-primal}
\end{equation}
Here $\mathbf 1$ denotes the vector all of whose components are equal to $1$.

By Farkas' lemma (see e.g. \cite{Rockafellar1970}), the system \eqref{eq:farkas-primal} is feasible if and only if
\begin{equation}
\lambda^T(f-Ap)
+
s\mathbf 1^T\mu
+
s\mathbf 1^T\nu
\geq0
\label{eq:farkas-dual}
\end{equation}
for every
\[
\lambda\in[0,\infty)^N,
\qquad
\mu,\nu\in[0,\infty)^M
\]
satisfying
\begin{equation}
A^T\lambda+\mu-\nu=0.
\label{eq:farkas-constraint}
\end{equation}

Fix $\lambda\geq0$. Then
\[
\mu-\nu=-A^T\lambda.
\]
For a scalar $a$,
\[
\min_{\substack{\mu,\nu\geq0\\ \mu-\nu=-a}}
(\mu+\nu)
=
|a|.
\]
Applying this coordinatewise yields
\[
\min_{\substack{\mu,\nu\geq0\\ \mu-\nu=-A^T\lambda}}
\mathbf 1^T(\mu+\nu)
=
\|A^T\lambda\|_1.
\]
Therefore \eqref{eq:farkas-dual} is equivalent to
\begin{equation}
\lambda^T(f-Ap)
+
s\|A^T\lambda\|_1
\geq0
\qquad
\text{for every }\lambda\geq0.
\label{eq:farkas-reduced}
\end{equation}

If
\[
A^T\lambda=0,
\]
then \eqref{eq:farkas-reduced} is automatic. Indeed, choose $v_0\in K$. Since
\[
Av_0\leq f
\]
and $\lambda\geq0$,
\[
0
=
\lambda^TAv_0
\leq
\lambda^Tf.
\]
Moreover,
\[
\lambda^TAp
=
(A^T\lambda)^Tp
=
0.
\]
Hence
\[
\lambda^T(f-Ap)\geq0.
\]

Thus only multipliers satisfying $A^T\lambda\neq0$ can impose a restriction on $s$. For such $\lambda$, \eqref{eq:farkas-reduced} is equivalent to
\[
s
\geq
\frac{
[\lambda^T(Ap-f)]_+
}{
\|A^T\lambda\|_1
}.
\]
Consequently,
\begin{equation}
d
=
\sup_{\substack{\lambda\geq0\\ A^T\lambda\neq0}}
\frac{
[\lambda^T(Ap-f)]_+
}{
\|A^T\lambda\|_1
}.
\label{eq:dual-distance}
\end{equation}
Indeed, every admissible radius $s$ is at least the right-hand side. Conversely, if $s$ is strictly larger than that supremum, then \eqref{eq:farkas-reduced} holds for every $\lambda\geq0$. Farkas' lemma therefore gives a correction $r$ satisfying \eqref{eq:farkas-primal}, and hence $d\leq s$. Letting $s$ decrease to the supremum gives equality.

The quotient in \eqref{eq:dual-distance} is homogeneous of degree zero in $\lambda$. Hence
\begin{equation}
d
=
\sup_{\substack{\lambda\geq0\\ \|A^T\lambda\|_1=1}}
[\lambda^T(Ap-f)]_+.
\label{eq:normalized-dual}
\end{equation}

We now split the normalized multiplier set according to the sign pattern of $A^T\lambda$. For
\[
\sigma=(\sigma_1,\ldots,\sigma_M)\in\{-1,1\}^M,
\]
define
\[
Q_\sigma(t)
:=
\left\{
\lambda\in\mathbb R^N:
\lambda\geq0,\ 
\sigma_j(A^T\lambda)_j\geq0\ \text{for }1\leq j\leq M,\ 
\sum_{j=1}^M\sigma_j(A^T\lambda)_j=1
\right\}.
\]
On $Q_\sigma(t)$,
\[
\sum_{j=1}^M
\sigma_j(A^T\lambda)_j
=
\|A^T\lambda\|_1.
\]
As a consequence,
\[
\{\lambda\geq0:\|A^T\lambda\|_1=1\}
=
\bigcup_{\sigma\in\{-1,1\}^M}
Q_\sigma(t).
\]

Each $Q_\sigma(t)$ is a polyhedron in $\lambda$-space. We maximize the linear functional
\[
\lambda
\longmapsto
\lambda^T(Ap-f)
\]
over $Q_\sigma(t)$.

Let $\rho$ be a recession direction of $Q_\sigma(t)$ (see e.g. \cite{Rockafellar1970}). Then $\rho\geq0.$.
Since the normalization remains equal to $1$ along $\lambda+s\rho$, we have
\[
\sum_{j=1}^M
\sigma_j(A^T\rho)_j
=
0.
\]
The sign inequalities also give
\[
\sigma_j(A^T\rho)_j\geq0
\qquad
(1\leq j\leq M),
\]
and therefore
\[
A^T\rho=0.
\]
As above, pointwise feasibility of $K$ implies
\[
\rho^Tf\geq0,
\]
while
\[
\rho^TAp=0.
\]
Hence
\[
\rho^T(Ap-f)\leq0.
\]
Thus no recession direction can increase the objective. It follows that the finite maximum may be attained at an extreme point of $Q_\sigma(t)$.

An extreme point is determined by the normalization equality together with enough active inequalities from
\[
\lambda_i\geq0
\text{\quad and \quad}
\sigma_j(A^T\lambda)_j\geq0
\]
to obtain $N$ linearly independent equalities. There are only finitely many possible choices of active constraints. For each such choice $\beta$, the corresponding candidate is determined by a square system
\[
M_\beta(t)\lambda=c_\beta,
\]
whose coefficients are semialgebraic functions of $t$.

After shrinking the interval, every determinant $\det M_\beta(t)$
is either identically zero or nowhere zero. In the latter case, Cramer's rule gives
\[
\lambda_\beta(t)
=
M_\beta(t)^{-1}c_\beta,
\]
and every component of $\lambda_\beta(t)$ is semialgebraic. Since there are only finitely many sign patterns and finitely many choices of active constraints, after shrinking the interval once more we obtain finitely many semialgebraic multiplier functions
\[
\lambda_1(t),\ldots,\lambda_L(t)\in[0,\infty)^N
\]
such that
\[
d
=
\max
\left\{
0,
\lambda_1(t)^T(Ap-f),
\ldots,
\lambda_L(t)^T(Ap-f)
\right\}.
\]

Restoring the notation
\[
A=A(y+t),
\qquad
p=P(y+t),
\qquad
f=f(y+t),
\]
and defining
\[
\Lambda_\alpha(t;P,f)
=
\lambda_\alpha(t)^T
\bigl(
A(y+t)P(y+t)-f(y+t)
\bigr),
\]
gives
\[
d^+_{y,P}(t)
=
\max
\{
0,
\Lambda_1(t;P,f),
\ldots,
\Lambda_L(t;P,f)
\},
\]
as required.
\end{proof}

\begin{corollary}[Semialgebraic error bound]\label{cor:4.2}
Let $A(t)$ be a matrix of one-sided semialgebraic germs. Then there exist constants $C>0$ and $K\geq0$, depending only on $A$, such that whenever
\[
K_b(t)
:=
\{u\in\mathbb R^M:A(t)u\leq b\}
\]
is nonempty, one has
\[
\operatorname{dist}_\infty
\bigl(
v,K_b(t)
\bigr)
\leq
Ct^{-K}
\|
(A(t)v-b)_+
\|_\infty
\]
for all sufficiently small $t>0$ and all $v\in\mathbb R^M$.
\end{corollary}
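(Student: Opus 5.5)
The plan is to reuse the dual formula established in the proof of Lemma~\ref{lem:4.1}, now with $b$ an arbitrary right-hand side instead of $f(y+t)$, and to control the size of the extreme multipliers by a power of $t$.

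\textbf{Step 1: dual formula with arbitrary $b$.} Fix $t>0$ small, and write $A=A(t)$. If $K_b(t)\neq\varnothing$, the Farkas argument leading to \eqref{eq:normalized-dual} gives
\[
\operatorname{dist}_\infty(v,K_b(t))=\sup_{\lambda\geq0,\ \|A^T\lambda\|_1=1}[\lambda^T(Av-b)]_+ .
\]
This step uses only nonemptiness, not the specific form $b=f(y+t)$.

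\textbf{Step 2: restriction to extreme points.} Split the normalized multiplier set into the polyhedra $Q_\sigma(t)$ for $\sigma\in\{-1,1\}^M$. The recession-direction argument shows that the supremum over each $Q_\sigma(t)$ is attained at an extreme point; again this uses only $K_b(t)\neq\varnothing$.

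\textbf{Step 3: the extreme points do not depend on $b$ or $v$.} The sets $Q_\sigma(t)$ are defined by $A(t)$ alone. Each extreme point solves a square system $M_\beta(t)\lambda=c_\beta$, where $\beta$ ranges over finitely many choices of active constraints. After shrinking the interval, $\det M_\beta$ is either identically zero or nowhere zero. Keep the nondegenerate $\beta$ and define $\lambda_\beta(t)=M_\beta(t)^{-1}c_\beta$, discarding those that are not $\geq0$ on a germ. (If a candidate changes admissibility, shrinking the interval makes admissibility constant.) This gives finitely many semialgebraic germs $\lambda_1,\dots,\lambda_L\geq0$, depending only on $A$, with
\[
\operatorname{dist}_\infty(v,K_b(t))=\max_\alpha[\lambda_\alpha(t)^T(A(t)v-b)]_+ .
\]

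\textbf{Step 4: growth bound and conclusion.} By Lemma~\ref{lem:3.1}, each component of each $\lambda_\alpha$ is either $0$ or has a Puiseux leading term $ct^\gamma$. Hence there are $C_0>0$ and $K\geq0$ with $\|\lambda_\alpha(t)\|_1\leq C_0t^{-K}$ for all $\alpha$ and all small $t$. Since $\lambda_\alpha\geq0$,
\[
[\lambda_\alpha^T(Av-b)]_+\leq \lambda_\alpha^T(Av-b)_+\leq \|\lambda_\alpha\|_1\|(Av-b)_+\|_\infty .
\]
Taking the maximum over $\alpha$ gives the claimed bound with $C=C_0$.

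\textbf{Main obstacle.} The delicate point is Step 3: the multiplier germs and the shrinking of the interval must be independent of $b$ and $v$, so that $C$, $K$ and the threshold on $t$ depend only on $A$. This holds because admissibility and the attainment at vertices are properties of $A(t)$. The dependence on $b$ enters only through whether $K_b(t)$ is nonempty, and that condition is needed only to rule out unbounded improvement along recession directions.
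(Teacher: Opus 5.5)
Your proposal is correct and is essentially the paper's proof. The paper also takes the dual/vertex description from Lemma~\ref{lem:4.1}, whose argument uses only nonemptiness of the polyhedron at each fixed $t$ and so applies verbatim to arbitrary $b$, with multiplier germs depending only on $A$. It then bounds $[\lambda_\alpha^T(Av-b)]_+\leq\|\lambda_\alpha\|_1\|(Av-b)_+\|_\infty$ and uses the polynomial growth of semialgebraic germs.

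The only imprecision is in Step~3: to get the stated equality you should keep the candidates that lie in $Q_\sigma(t)$, not merely those with $\lambda_\beta\geq0$. This does not affect the result, because the corollary only needs $\operatorname{dist}_\infty\leq\max_\alpha[\lambda_\alpha^T(Av-b)]_+$, so retaining extra nonnegative candidates is harmless.
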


\begin{proof}
Apply Lemma \ref{lem:4.1} to the matrix $A(t)$. After shrinking the interval, there are finitely many semialgebraic multiplier functions
\[
\lambda_\alpha(t)\in[0,\infty)^N,
\qquad
\alpha=1,\ldots,L,
\]
depending only on $A$, such that
\[
\operatorname{dist}_\infty
\bigl(
v,K_b(t)
\bigr)
=
\max_\alpha
\left[
\lambda_\alpha(t)^T
\bigl(
A(t)v-b
\bigr)
\right]_+.
\]
Since $\lambda_\alpha(t)\geq0$,
\[
\left[
\lambda_\alpha(t)^T
\bigl(
A(t)v-b
\bigr)
\right]_+
\leq
\lambda_\alpha(t)^T
\bigl(
A(t)v-b
\bigr)_+,
\]
and therefore
\[
\left[
\lambda_\alpha(t)^T
\bigl(
A(t)v-b
\bigr)
\right]_+
\leq
\|\lambda_\alpha(t)\|_1
\|
(A(t)v-b)_+
\|_\infty.
\]
Hence
\[
\operatorname{dist}_\infty
\bigl(
v,K_b(t)
\bigr)
\leq
\left(
\max_\alpha
\|\lambda_\alpha(t)\|_1
\right)
\|
(A(t)v-b)_+
\|_\infty.
\]

Each component of each $\lambda_\alpha(t)$ is a one-variable semialgebraic germ, and therefore has at most polynomial growth as $t\downarrow0$. Since there are only finitely many $\lambda_\alpha$, there exist $C>0$ and $K\geq0$, depending only on $A$, such that
\[
\max_\alpha
\|\lambda_\alpha(t)\|_1
\leq
Ct^{-K}.
\]
The desired estimate follows.
\end{proof}

\begin{corollary}[Finite description and power gain]\label{cor:4.3}
Fix $m\in\{0,1,2\}$. For each exceptional point $y$ and each applicable side, there is an integer $s_y$ such that
\[
d^\pm_{y,P}(t)=o(t^m)
\]
is equivalent to a finite Boolean combination of homogeneous linear sign conditions in
\[
P
\quad\text{and}\quad
J_y^{(s_y)}f.
\]
Moreover, for some rational $\eta>0$ depending only on $m$ and the fixed coefficient germs,
\[
d^\pm_{y,P}(t)=o(t^m)
\quad\Longrightarrow\quad
d^\pm_{y,P}(t)=O(t^{m+\eta}).
\]
\end{corollary}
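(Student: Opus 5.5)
The plan is to combine the dual formula of Lemma~\ref{lem:4.1} with the finite-family test of Corollary~\ref{cor:3.4}. Fix an exceptional point $y$ and the right side; the left side is identical after replacing $y+t$ by $y-t$, using the left-hand remark in Section~\ref{sec:3}. The statement is about the asymptotics of $d^+_{y,P}$, so we may assume $K_f(y+t)\neq\varnothing$ for all small $t>0$; otherwise $d^+_{y,P}(t)=+\infty$ for $t$ in a set accumulating at $0$. (This feasibility is itself a finite Boolean-linear condition in $f(y+t)$ by Corollary~\ref{cor:2.2}, and hence one in a finite jet of $f$ at $y$ by Lemma~\ref{lem:3.3}.) Lemma~\ref{lem:4.1} then gives, after shrinking the interval,
\[
d^+_{y,P}(t)=\max\{0,\Lambda_1(t;P,f),\ldots,\Lambda_L(t;P,f)\}=\max_\alpha\,[\Lambda_\alpha(t;P,f)]_+ .
\]
Since a finite maximum of nonnegative quantities is $o(t^m)$ exactly when each of them is, we get
\[
d^+_{y,P}(t)=o(t^m)\iff [\Lambda_\alpha(t;P,f)]_+=o(t^m)\quad\text{for every }\alpha .
\]

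The key step is to put each $\Lambda_\alpha$ into the form \eqref{3.2}. Parametrize $P$ by its Taylor coefficients at $y$, namely $z=(P_j^{(k)}(y)/k!)_{j\leq M,\,k\leq m}\in\mathbb R^{M(m+1)}$, so that $P_j(y+t)=\sum_{k\leq m} z_{jk}t^k$. Expanding gives
\[
\Lambda_\alpha(t;P,f)=\sum_{j,k}\Bigl(\sum_i\lambda_{\alpha,i}(t)A_{ij}(y+t)\,t^k\Bigr)z_{jk}-\sum_i \lambda_{\alpha,i}(t)f_i(y+t).
\]
The coefficients $\sum_i \lambda_{\alpha,i}(t)A_{ij}(y+t)t^k$ and $-\lambda_{\alpha,i}(t)$ are right-hand semialgebraic germs at $0$, because products and sums of semialgebraic germs are semialgebraic. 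Corollary~\ref{cor:3.4} applied to the finite family $\{\Lambda_\alpha\}$ therefore yields a common jet order $s_y$ and a common rational $\eta>0$ with two properties. First, the simultaneous conditions are equivalent to a finite Boolean combination of homogeneous linear sign conditions in $(z,J^{(s_y)}_y f)$, that is, in $P$ and $J^{(s_y)}_y f$. Second, under these conditions each $[\Lambda_\alpha]_+=O(t^{m+\eta})$, and hence $d^+_{y,P}=O(t^{m+\eta})$ as a finite maximum. Taking $\eta$ as the minimum over the two sides gives the stated uniformity.

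The point needing the most care is that the multipliers $\lambda_\alpha$, and hence $s_y$ and $\eta$, must depend only on the germ of $A$ and not on $f$. Lemma~\ref{lem:4.1} asserts exactly this: its extreme-point construction uses $f$ only through the assumption $K_f\neq\varnothing$, which serves to exclude recession directions. Consequently the coefficient germs fed into Lemma~\ref{lem:3.3}, and therefore $s_y$, $\eta$ and the linear functionals in the sign conditions, are determined by $A$, $m$ and $y$ alone.
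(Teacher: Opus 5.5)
Your argument is correct and is essentially the paper's proof: the multipliers from Lemma~\ref{lem:4.1} write $d^\pm_{y,P}$ as a finite maximum of positive parts of expressions of the form \eqref{3.2}, with the Taylor coefficients of $P$ as the auxiliary parameter $z$, and Corollary~\ref{cor:3.4} then gives both the finite sign description and the power gain. One correction: your parenthetical claim that eventual feasibility $K_f(y+t)\neq\varnothing$ is itself a finite-jet condition is false. For example, a row $0\cdot v\leq f_2$ with $f_2(y+t)=e^{-1/t}\sin(1/t)$ for $t>0$ (and $f_2=0$ left of $y$) has the same jet at $y$ as $f_2\equiv0$, yet $K_f(y+t)=\varnothing$ whenever $\sin(1/t)<0$. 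Moreover, Lemma~\ref{lem:3.3} controls only $[\Lambda]_+=o(t^m)$, not eventual signs. So the corollary must be read, as in the paper, under the standing feasibility hypothesis of Lemma~\ref{lem:4.1}, which Proposition~\ref{pro:4.7} makes explicit.
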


\begin{proof}
Use Lemma \ref{lem:4.1} and Corollary \ref{cor:3.4}. The coefficients of the polynomial $P$ form a finite-dimensional auxiliary parameter, and the expressions supplied by Lemma \ref{lem:4.1} are of the form treated in Section \ref{sec:3}.
\end{proof}

\subsection{Weighted \texorpdfstring{$C^m$}{Cm} interpolation}

For $\lambda\in\mathbb R$, let $X^m_\lambda$ be the class of functions $u\in C^m((0,\varepsilon))$
satisfying
\begin{equation}
|u^{(r)}(t)|
\leq
Ct^{\lambda-r},
\qquad
0\leq r\leq m,
\label{eq:weighted-space}
\end{equation}
for some constant $C$ and all sufficiently small $t>0$. If
\[
\lambda>m,
\]
then every $u\in X^m_\lambda$ extends to a $C^m$ function at $0$ with zero $m$-jet. Indeed, \eqref{eq:weighted-space} gives
\[
u^{(r)}(t)=O(t^{\lambda-r})\longrightarrow0
\qquad
(0\leq r\leq m),
\]
and the successive derivatives therefore extend continuously by zero.

If $a$ is a nonzero one-sided semialgebraic germ, then after shrinking the interval,
\[
a^{(r)}(t)
=
O\bigl(
t^{\operatorname{ord}(a)-r}
\bigr),
\qquad
0\leq r\leq m,
\]
and
\[
(a^{-1})^{(r)}(t)
=
O\bigl(
t^{-\operatorname{ord}(a)-r}
\bigr),
\qquad
0\leq r\leq m.
\]
Consequently, by Leibniz' rule,
\begin{equation}
aX^m_\lambda
\subset
X^m_{\lambda+\operatorname{ord}(a)},
\qquad
a^{-1}X^m_\lambda
\subset
X^m_{\lambda-\operatorname{ord}(a)}.
\label{eq:weighted-multiplication}
\end{equation}

\begin{lemma}[Weighted $C^m$ scalar insertion]\label{lem:4.4}
Fix $m\in\{0,1,2\}$. Let
\[
L_1,\ldots,L_p,U_1,\ldots,U_q
\in
X^m_\lambda
\]
and suppose
\[
L_i\leq U_j
\text{\quad for all $i,j$ and all sufficiently small $t>0$.}
\]
Then there exists $h\in X^m_\lambda$
such that
\[
L_i\leq h\leq U_j \text{\quad for every $i,j$.}
\]

\end{lemma}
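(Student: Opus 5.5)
The plan is to reduce the weighted statement to an \emph{unweighted, uniformly bounded} insertion problem on a half-line, solve that problem locally on unit intervals using Theorem \ref{thm:2.4}, and glue the local solutions by a partition of unity. Convex combinations preserve two-sided pointwise bounds, so the gluing does not disturb the inequalities.

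\emph{Normalization.} Set $\tilde L_i=t^{-\lambda}L_i$ and $\tilde U_j=t^{-\lambda}U_j$. By \eqref{eq:weighted-multiplication} with $a=t^{\pm\lambda}$, these functions lie in $X^m_0$, and the inequalities $\tilde L_i\leq\tilde U_j$ persist because $t^{-\lambda}>0$. Next substitute $t=e^{-\sigma}$ with $\sigma\in(\sigma_0,\infty)$. Since $t\,\frac{d}{dt}=-\frac{d}{d\sigma}$, each operator $t^r\frac{d^r}{dt^r}$ is a constant-coefficient combination of $\frac{d}{d\sigma},\ldots,\frac{d^r}{d\sigma^r}$, and conversely. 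Hence $X^m_0$ corresponds exactly to the space $C^m_b((\sigma_0,\infty))$ of $C^m$ functions whose derivatives up to order $m$ are bounded. It therefore suffices to prove the following: if $\ell_i,u_j\in C^m_b((\sigma_0,\infty))$ satisfy $\ell_i\leq u_j$, then some $h\in C^m_b$ lies between them. Undoing the two transformations then gives the required element of $X^m_\lambda$.

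\emph{Local solutions with uniform bounds, then gluing.} Cover $(\sigma_0+1,\infty)$ by the intervals $I_k=(k-1,k+2)$, and let $\{\theta_k\}$ be a translated partition of unity subordinate to $\{(k,k+1)\text{-neighbourhoods}\}$ whose derivatives up to order $m$ are bounded uniformly in $k$. On each $I_k$, Lemma \ref{lem:2.5} gives an interpolant $h_k\in C^m(I_k)$. Define $h=\sum_k\theta_kh_k$. At each point this is a convex combination of numbers lying in $[\max_i\ell_i,\min_ju_j]$, so $h$ satisfies all the inequalities. Moreover, $\|h\|_{C^m}$ is bounded once $\sup_k\|h_k\|_{C^m(\text{supp}\,\theta_k)}<\infty$.

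\emph{Main obstacle.} The difficulty is exactly this uniform bound: Theorem \ref{thm:2.4} produces an interpolant but says nothing about the size of its $C^m$ norm. I would first try to extract a quantitative version of the Nagel--Rudin theorem: for data on an interval of length $3$ with $C^m$ norms at most $C$, there should be an interpolant on the middle unit interval with norm at most $C'(C)$. Their proof for $m\leq2$ is essentially constructive, which makes this plausible. Translation invariance of the normalized problem then makes $C'$ independent of $k$.

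As a fallback, I would argue by contradiction and compactness. Suppose there were data $(\ell^{(n)},u^{(n)})$ bounded in $C^m$ but admitting no interpolant of norm at most $n$. Pass to a subsequence converging in $C^{m-1}$, with $m$-th derivatives converging weak-$*$. Then try to perturb a fixed interpolant of the limit data, using the slack $\ell\leq u$ together with a small smooth correction. This perturbation step is the delicate part, especially where upper and lower bounds touch.

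For $m=0$ none of this is needed: one can take $h=\min_ju_j$ directly, since it is continuous and bounded by the weights, so only the cases $m=1,2$ require the quantitative argument.
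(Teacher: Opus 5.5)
\textbf{There is a genuine gap: the uniform bound you call the main obstacle is never proved.}

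Several parts of your proposal are sound. Your normalization matches the paper's, which does it in one step, $\widetilde u(x)=e^{\lambda x}u(e^{-x})$, turning $X^m_\lambda$ into functions on a half-line with bounded derivatives through order $m$. Your $m=0$ case is fine. The gluing step is also correct: a translated partition of unity with uniformly bounded derivatives, applied to local interpolants, preserves every two-sided bound by convexity.

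The proof is incomplete because the step you call the main obstacle is the entire content of the lemma beyond Theorem~\ref{thm:2.4}, and you leave it as ``plausible.'' Theorem~\ref{thm:2.4} and Lemma~\ref{lem:2.5} are purely qualitative. They give some $h_k\in C^m(I_k)$ with no control of $\|h_k\|_{C^m}$ on $\operatorname{supp}\theta_k$. So $\sum_k\theta_kh_k$ need not have bounded derivatives, and need not correspond to an element of $X^m_\lambda$.

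What is missing is a bounded weak-lattice interpolation property for $m=1,2$. If two lower and two upper functions have bounded derivatives through order $m$ on the half-line, an interpolant can be chosen whose derivatives through order $m$ are also bounded. For your localized version you need something stronger: a local estimate whose constant depends only on the data bound, which is what makes it uniform in $k$.

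The paper does not localize at all. It takes this bounded property from the explicit Riesz construction of Nagel and Rudin and applies it directly to the transformed data $\widetilde L_i,\widetilde U_j$ on the half-line. It then runs the finite-family induction of Lemma~\ref{lem:2.5}, which stays in the bounded class because it uses the two-by-two step only finitely many times. Granting the global form, your gluing is unnecessary. Granting the local form, your gluing finishes the proof. Without either, neither route closes.

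Your compactness fallback does not supply the bound either. Data bounded in $C^m$ converge, after passing to a subsequence, only in $C^{m-1}$, with $m$-th derivatives converging weak-$*$. So the limit data are merely $C^{m-1,1}$, and Theorem~\ref{thm:2.4} does not apply to them.

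More seriously, perturbing a fixed interpolant cannot work near contact points. If some $\ell_i$ and $u_j$ touch at an interior point $x_0$, every $C^m$ interpolant must satisfy $h(x_0)=\ell_i(x_0)$ and $h'(x_0)=\ell_i'(x_0)=u_j'(x_0)$. For $m=2$ it must also satisfy $\ell_i''(x_0)\le h''(x_0)\le u_j''(x_0)$. Meanwhile, the contact sets of the approximating data can differ arbitrarily from that of the limit. So the quantitative input really has to be read off from an explicit construction, which is what the paper invokes.
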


\begin{proof}
Set $t=e^{-x}$
and define
\[
\widetilde u(x)
=
e^{\lambda x}u(e^{-x}).
\]
For $m=0$, the condition $u\in X^0_\lambda$ is exactly the boundedness of $\widetilde u$ on $[0,R]$ for some large $R$.

For $m\geq1$, direct differentiation gives
\[
\widetilde u'(x)
=
e^{\lambda x}
\bigl(
\lambda u-tu'
\bigr).
\]
For $m=2$, differentiating once more gives
\[
\widetilde u''(x)
=
e^{\lambda x}
\bigl(
\lambda^2u
-
(2\lambda-1)tu'
+
t^2u''
\bigr).
\]
These identities are triangular in
\[
e^{\lambda x}u,
\qquad
e^{\lambda x}tu',
\qquad
e^{\lambda x}t^2u'',
\]
and therefore show that, for $m=0,1,2$, $u\in X^m_\lambda$
if and only if $\widetilde u,
\widetilde u',
\ldots,
\widetilde u^{(m)}$
are uniformly bounded on $[0,R]$ for some large $R$.

For $m=0$, bounded continuous functions form a lattice, so the bounded weak-lattice interpolation property is immediate. For $m=1,2$, the one-dimensional Riesz decomposition results of Nagel and Rudin \cite{NagelRudin1974}, together with their explicit Riesz construction, give the corresponding bounded weak-lattice interpolation property: if the data and their derivatives through order $m$ are uniformly bounded, the interpolant may be chosen with derivatives through order $m$ uniformly bounded.

Apply this bounded interpolation property to the transformed lower and upper functions $\widetilde L_i$ and $\widetilde U_j$,
using the finite-family induction from Lemma \ref{lem:2.5}. We obtain a transformed interpolant $\widetilde h$ whose derivatives through order $m$ are uniformly bounded and which satisfies
\[
\widetilde L_i
\leq
\widetilde h
\leq
\widetilde U_j.
\]
Multiplication by
\[
e^{-\lambda x}=t^\lambda
\]
gives the required function $h\in X^m_\lambda$.
\end{proof}

\begin{lemma}[Weighted boxed selection]\label{lem:4.5}
Fix $m\in\{0,1,2\}$. Let
\[
B(t)
=
\bigl(
B_{ij}(t)
\bigr)_{1\leq i\leq L,\ 1\leq j\leq M}
\]
be a matrix of one-sided semialgebraic germs on $t>0$. Then there exists a constant
\[
D=D(B,M,m)\geq0
\]
with the following property.

Let $\lambda\in\mathbb R$, let $\Lambda\geq\lambda$, and suppose
\[
g_i\in X^m_\lambda,
\qquad
i=1,\ldots,L.
\]
Assume that, for every sufficiently small $t>0$, the polyhedron
\[
K(t)
=
\left\{
v\in\mathbb R^M:
B(t)v\leq g(t),
\quad
|v_j|\leq C_0t^\Lambda
\ \text{for }j=1,\ldots,M
\right\}
\]
is nonempty. Then, after decreasing the interval if necessary, there exists
\[
v=(v_1,\ldots,v_M)
\in
C^m((0,\varepsilon),\mathbb R^M)
\]
such that
\[
v(t)\in K(t)
\]
for all sufficiently small $t>0$, and
\[
v_j\in X^m_{\lambda-D},
\qquad
j=1,\ldots,M.
\]
\end{lemma}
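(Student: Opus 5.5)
The approach is to mimic the proof of Proposition \ref{pro:2.6}, with Lemma \ref{lem:4.4} in place of Lemma \ref{lem:2.5}, and to keep track of weights throughout. The box constraints $\pm v_j\leq C_0t^\Lambda$ are simply appended as extra rows of the system. This gives an augmented matrix $\widetilde B(t)$ with right-hand sides $g_i$ and $C_0t^\Lambda$. Since $\Lambda\geq\lambda$, we have $C_0t^\Lambda\in X^m_\lambda$: its $r$-th derivative is $O(t^{\Lambda-r})\leq O(t^{\lambda-r})$ for small $t$. So all right-hand sides lie in $X^m_\lambda$.

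We then perform formal Fourier--Motzkin elimination (Lemma \ref{lemma:2.1}) on $\widetilde B(t)v\leq b$, with $b$ treated as a parameter, and shrink the interval so that every coefficient appearing is a semialgebraic germ that is smooth and either identically zero or of fixed sign. The proof is by induction on $M$, and the claim proved is slightly stronger: the selection lies in $X^m_{\lambda-D}$, where $D$ is determined by the finitely many Puiseux orders occurring in the elimination. The key point is that the boxed polyhedron is nonempty at each $t$, so every projected system is pointwise feasible and remains so after substituting a feasible $v'(t)$ (exactly as in Proposition \ref{pro:2.6}).

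The induction step goes as follows. Given $v'\in X^m_{\lambda-D'}$ solving the projection, the scalar bounds
\[
L_j=\frac{b_j-c_j\cdot v'}{a_j},\qquad U_i=\frac{b_i-c_i\cdot v'}{a_i}
\]
lie in $X^m_{\lambda-D'-D''}$ by \eqref{eq:weighted-multiplication}. Here $D''$ bounds $\operatorname{ord}(a_i)$ and $-\operatorname{ord}(c_i)$ over the finitely many rows. We also use that $X^m_\mu\subset X^m_{\mu'}$ for $\mu'\leq\mu$, and that sums stay in the class. Lemma \ref{lem:4.4} then supplies $v_M\in X^m_{\lambda-D'-D''}$ lying between the bounds. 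When one family is empty, we take the bound itself (or the finite one-sided induction); when both are empty, we take $v_M=0$. The resulting $D$ is a sum of order bounds along the elimination, so it depends only on $B$, $M$ and $m$, and not on $g$, $\lambda$, $\Lambda$ or $C_0$. The base case $M=0$ is vacuous.

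The main subtlety is the bookkeeping of the projected right-hand sides. After clearing denominators, the projected inequalities take the form $\sum_\beta \kappa_\beta(t)b_\beta\geq(\cdots)\cdot v'$ with semialgebraic $\kappa_\beta$. Their right-hand sides must be shown to lie in a weighted class $X^m_{\lambda-\delta}$ with $\delta$ controlled uniformly by orders. This again follows from \eqref{eq:weighted-multiplication} applied to each factor, but the cleanest organization is to phrase the induction hypothesis for general systems $\widetilde B(t)v\leq b(t)$ with $b\in X^m_\mu$ and conclusion $v\in X^m_{\mu-D}$. That formulation absorbs the box rows automatically and makes the uniformity of $D$ transparent.

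One further point needs checking: the proof of Lemma \ref{lem:4.4} requires a common interval $(0,\varepsilon)$ on which all the estimates hold. This is arranged by the finitely many shrinkings of the interval already made.
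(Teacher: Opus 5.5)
Your proposal is correct and follows essentially the same route as the paper: induction on $M$ via Fourier--Motzkin elimination of the last coordinate, tracking weighted orders through \eqref{eq:weighted-multiplication}, and inserting the last coordinate with Lemma~\ref{lem:4.4}. The only difference is cosmetic. You append the box constraints as extra rows of the matrix, which also ensures that the lower and upper families are both nonempty at every stage. The paper instead carries the box bounds $\pm C_0t^\Lambda$ explicitly through each projection step.
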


\begin{proof}
We argue by induction on $M$. The assertion is trivial for $M=0$. Assume it has been proved for $M-1$, and write $v=(v',s)$ with $v' \in \R^{M-1}$.

Let $a_i=B_{iM}$. We write the $i$-th inequality as
\begin{equation}
a_i(t)s+b_i(t)\cdot v'\leq g_i(t),
\label{eq:boxed-row}
\end{equation}
After shrinking the interval, every nonzero semialgebraic germ $a_i$ has a fixed sign. Thus the rows split into three classes:
\[
a_i>0,
\qquad
a_i<0,
\qquad
a_i\equiv0.
\]

For $a_i>0$, \eqref{eq:boxed-row} gives the upper bound
\[
s
\leq
U_i(t,v')
:=
\frac{
g_i(t)-b_i(t)\cdot v'
}{
a_i(t)
},
\]
while for $a_j<0$ it gives the lower bound
\[
s
\geq
L_j(t,v')
:=
\frac{
g_j(t)-b_j(t)\cdot v'
}{
a_j(t)
}.
\]
We also have the box bounds
\[
-C_0t^\Lambda
\leq
s
\leq
C_0t^\Lambda.
\]

Hence the existence of $s$ is equivalent to the following system in $v'$:

\begin{enumerate}
\item the rows with $a_i\equiv0$,
\[
b_i(t)\cdot v'\leq g_i(t);
\]

\item every lower bound is no larger than every upper bound,
\[
L_j(t,v')\leq U_i(t,v');
\]

\item every lower bound is no larger than the box upper bound,
\[
L_j(t,v')\leq C_0t^\Lambda;
\]

\item the box lower bound is no larger than every upper bound,
\[
-C_0t^\Lambda\leq U_i(t,v');
\]

\item the remaining coordinate box constraints,
\[
|v'_k|
\leq
C_0t^\Lambda,
\qquad
k=1,\ldots,M-1.
\]
\end{enumerate}

This is exactly the Fourier--Motzkin projection of $K(t)$ onto the first $M-1$ coordinates.

We next keep track of weighted orders. For every nonzero semialgebraic germ $c(t)$, \eqref{eq:weighted-multiplication} gives
\[
cX^m_\mu
\subset
X^m_{\mu+\operatorname{ord}(c)},
\qquad
c^{-1}X^m_\mu
\subset
X^m_{\mu-\operatorname{ord}(c)}.
\]
Thus multiplication or division by a fixed semialgebraic germ changes the weighted order by only a fixed amount.

The Fourier--Motzkin projection above uses only finitely many additions, multiplications by fixed semialgebraic germs, and divisions by the nonzero germs $a_i$. Therefore there exists a constant
\[
D_1=D_1(B,m)\geq0
\]
such that every right-hand side occurring in the projected system belongs to $X^m_{\lambda-D_1}$.
The box term $C_0t^\Lambda$
also belongs to this class, since $\Lambda\geq\lambda\geq\lambda-D_1$.

The projected system is pointwise nonempty because $K(t)\neq\varnothing$. By the induction hypothesis, there is a $C^m$ selection $v'(t)$
satisfying the projected system and
\[
v'_k
\in
X^m_{\lambda-D_1-D_2},
\qquad
k=1,\ldots,M-1,
\]
where $D_2$ depends only on the projected coefficient matrix, $M$, and $m$, and hence ultimately only on $B$, $M$, and $m$.

Now substitute $v'(t)$ into the original inequalities. We obtain finitely many scalar lower bounds
\[
L_j(t)
=
\frac{
g_j(t)-b_j(t)\cdot v'(t)
}{
a_j(t)
}
\]
and upper bounds
\[
U_i(t)
=
\frac{
g_i(t)-b_i(t)\cdot v'(t)
}{
a_i(t)
},
\]
together with
\[
-C_0t^\Lambda
\leq
s
\leq
C_0t^\Lambda.
\]

Again, only finitely many additions, multiplications by fixed semialgebraic germs, and divisions by the fixed nonzero $a_i$ occur. Hence there is another constant
\[
D_3=D_3(B,M,m)\geq0
\]
such that all of these lower and upper bounds belong to $X^m_{\lambda-D_1-D_2-D_3}$.

Because $v'(t)$ satisfies the exact Fourier--Motzkin projection, every lower bound is no larger than every upper bound. Therefore, Lemma \ref{lem:4.4} gives
$s
\in
X^m_{\lambda-D_1-D_2-D_3}$
satisfying all the scalar bounds. Thus
\[
v(t)
=
(v'(t),s(t)) \in K(t)
\text{\quad for all sufficiently small $t>0$.}
\]

Finally, since the Fourier--Motzkin procedure has only finitely many steps, the total loss in weighted order is bounded by a constant depending only on the fixed semialgebraic coefficient matrix $B$, the dimension $M$, and $m$. Taking
\[
D
\geq
D_1+D_2+D_3
\]
large enough to dominate all losses arising in the induction gives
\[
v_j\in X^m_{\lambda-D},
\qquad
j=1,\ldots,M,
\]
as required.
\end{proof}

\subsection{The one-sided \texorpdfstring{$C^m$}{Cm} endpoint theorem}

We now state the main result regarding exceptional points. It completely characterizes which polynomial $m$-jets arise from feasible one-sided $C^m$ selections for
\[
m\in\{0,1,2\}.
\]

\begin{theorem}[$C^m$ flat-correction criterion]\label{thm:4.6}
Fix $m\in\{0,1,2\}$. Let $M,N\geq1$, and let
\[
A
=
(A_{ij})_{1\leq i\leq N,\ 1\leq j\leq M}
:
\mathbb R
\to
\mathbb R^{N\times M}
\]
be a matrix of univariate semialgebraic functions. Let
\[
f=(f_1,\ldots,f_N)
\in
C^\infty(\mathbb R,\mathbb R^N),
\]
and define, for each $x\in\mathbb R$,
\[
K_f(x)
:=
\{v\in\mathbb R^M:A(x)v\leq f(x)\},
\]
where the inequality is understood coordinatewise.

Let $\Sigma_A$ be the finite exceptional set associated with $A$ in the sense of Definition~\ref{def:exceptional set}, and fix $y\in\Sigma_A$.
Assume that a regular interval lies immediately to the right of $y$, and that for some $\varepsilon>0$,
\[
K_f(y+t)\neq\varnothing
\qquad
(0<t<\varepsilon).
\]
Let $P\in\mathcal P^m(\mathbb R,\mathbb R^M)$ where $\mathcal P^m(\mathbb R,\mathbb R^M)$ denotes the space of $\mathbb R^M$-valued polynomials of degree at most $m$. Then the following are equivalent.

\begin{enumerate}
\item[(i)] There exist $\varepsilon'>0$ and
\[
F
\in
C^m([y,y+\varepsilon'),\mathbb R^M)
\]
such that
\[
J_y^mF=P
\]
and
\[
A(x)F(x)\leq f(x)
\qquad
(y\leq x<y+\varepsilon').
\]

\item[(ii)] One has
\[
P(y)\in K_f(y)
\]
and
\[
d^+_{y,P}(t)
:=
\operatorname{dist}_\infty
\bigl(
P(y+t),K_f(y+t)
\bigr)
=
o(t^m)
\qquad
(t\downarrow0),
\]
where
\[
\operatorname{dist}_\infty(u,K)
:=
\inf_{v\in K}\|u-v\|_\infty,
\qquad
\|w\|_\infty
:=
\max_{1\leq j\leq M}|w_j|.
\]
For $m=0$, the condition $o(t^m)$ means $o(1)$.
\end{enumerate}

The analogous statement holds from the left.
\end{theorem}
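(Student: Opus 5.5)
The direction (i)$\Rightarrow$(ii) is elementary. Given $F$ with $J^m_yF=P$, continuity at $y$ gives $P(y)=F(y)\in K_f(y)$. Since $F(y+t)\in K_f(y+t)$, we get
\[
d^+_{y,P}(t)\leq \|P(y+t)-F(y+t)\|_\infty=o(t^m)
\]
by Taylor's theorem for $C^m$ functions; for $m=0$ this is just continuity.

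For (ii)$\Rightarrow$(i), the plan is to write $F=P+G$ near $y$, where $G$ is a correction with zero $m$-jet at $y$. The correction must satisfy the relaxed Taylor system
\[
A(y+t)G(t)\leq g(t):=f(y+t)-A(y+t)P(y+t),\qquad 0<t<\varepsilon .
\]

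The first step is the power gain. By Corollary~\ref{cor:4.3}, hypothesis (ii) upgrades to $d^+_{y,P}(t)\leq C_0t^{m+\eta}$ for some rational $\eta>0$. Hence, for each small $t$, the polyhedron
\[
K(t)=\{u:A(y+t)u\leq g(t),\ |u_j|\leq C_0t^{m+\eta}\}
\]
is nonempty: take $u=v-P(y+t)$, where $v\in K_f(y+t)$ nearly attains the distance, after enlarging $C_0$ slightly.

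To make the correction genuinely $C^m$ and flat, we need the right-hand side $g$ itself to lie in a weighted class with high exponent. We do not know this, since $g$ is only a smooth-times-semialgebraic function of order possibly negative. The fix is to use Lemma~\ref{lem:4.5} with a large gap $\Lambda-\lambda$. The coefficient germs $A_{ij}(y+t)$ and the $C^\infty$ data give $g_i\in X^m_{\lambda_0}$ for some possibly negative $\lambda_0$. Lemma~\ref{lem:4.5} would then only give $G\in X^m_{\lambda_0-D}$, which is too weak.

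So I would first subtract off a finite Taylor-type piece to raise the order of the right-hand side. The issue is that the rows with large order can be negative and harmless; only the positive parts matter. The cleanest route I would try is to replace $g$ by
\[
\tilde g:=\min\{g,\ C_1t^{\Lambda'}\mathbf 1\}.
\]
This is not smooth, so instead I would work with the inclusion $\{A u\leq g\}\supset\{Au\leq \hat g\}$ for a $C^m$ function $\hat g\leq g$ with $\hat g\in X^m_{\lambda}$ for large $\lambda$. The function $\hat g$ is obtained by Lemma~\ref{lem:4.4} applied scalarly to each row, inserting between the lower bound $\max\{(A(y+t)u_0(t))_i,\ -C t^{\lambda}\}$-type data and the upper bounds $g_i$ and $C t^\lambda$.

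Concretely, using the box nonemptiness and the semialgebraic error bound of Corollary~\ref{cor:4.2}, I expect that one can arrange the row data to be of order at least $m+\eta-K'$ in the weighted sense. I would then choose the initial Taylor order $s$ in the finite-jet reduction (Lemma~\ref{lem:3.3}) so large that, after the loss $D$ in Lemma~\ref{lem:4.5}, the output still lies in $X^m_{\lambda'}$ with $\lambda'>m$. This yields a $C^m$ correction $G$ on $(0,\varepsilon')$ in $X^m_{\lambda'}$ with $\lambda'>m$. By the remark after~\eqref{eq:weighted-space}, $G$ extends to $[0,\varepsilon')$ with zero $m$-jet. Then $F(x)=P(x)+G(x-y)$ is $C^m$ on $[y,y+\varepsilon')$, has $J^m_yF=P$, and is feasible for $x>y$. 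At $x=y$ it is feasible by the hypothesis $P(y)\in K_f(y)$.

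The main obstacle is this bookkeeping of weights. Lemma~\ref{lem:4.5} loses a fixed amount $D$, so the box exponent $m+\eta$ alone does not suffice unless the data $g$ are also high order. I would handle it by splitting $f(y+t)$ into its degree-$s$ Taylor polynomial plus a remainder in $X^m_{s-m}$. The polynomial part combined with $A(y+t)P(y+t)$ is a semialgebraic germ, and I would treat it exactly by a semialgebraic choice of a point in the boxed polyhedron, using definable choice. That semialgebraic selection lies in some $X^m_\mu$ with $\mu>m$, since it is bounded by $C_0t^{m+\eta}$ and semialgebraic, so its derivatives obey the Puiseux bounds. Only the smooth flat remainder is then fed into Lemma~\ref{lem:4.5}, with $s$ chosen large relative to $D$ and $K$. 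The left-hand statement follows by $t\mapsto -t$.
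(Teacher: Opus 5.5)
\textbf{Verdict: there is a genuine gap.} Your proof of (i)$\Rightarrow$(ii) is correct. For (ii)$\Rightarrow$(i), your skeleton is the paper's: power gain from Corollary~\ref{cor:4.3}, Taylor splitting, a definable choice $R_0$ for the semialgebraic part (flat because $\|R_0\|_\infty=O(t^{m+\eta})$), then Lemma~\ref{lem:4.5} for a flat correction. The gap is the claim that ``only the smooth flat remainder is fed into Lemma~\ref{lem:4.5}.'' Once $R_0$ is fixed, the correction must satisfy $A(t)E\leq w(t)+q(t)$. Here $q$ is the flat Taylor remainder, but $w\geq 0$ is the slack left by $R_0$. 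That slack is a semialgebraic germ whose Puiseux order can be $0$ or negative and need not improve with the Taylor order. So the data lie only in $X^m_\lambda$ for small $\lambda$, and Lemma~\ref{lem:4.5} returns $E\in X^m_{\lambda-D}$, which need not be flat.

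None of the obvious repairs works:
\begin{itemize}
\item You cannot drop $w$: with $q\leq 0$, the system $A(t)E\leq q(t)$ can be infeasible, e.g.\ when $K_f(y+t)$ is a single point and $q<0$.
\item Raising the jet order in Lemma~\ref{lem:3.3} does not help, because $\eta$ is fixed by the gaps between Puiseux exponents. Since Lemma~\ref{lem:4.5} needs $\Lambda\geq\lambda$, a box of size $t^{m+\eta}$ caps the output weight at $m+\eta-D$.
\end{itemize}
The missing idea is the paper's row splitting. Corollary~\ref{cor:4.2} gives a pointwise correction $E^*$ in the much smaller box $\|E\|_\infty\leq C't^{\Lambda}$, with $\Lambda=L+1-K$ growing with the Taylor order $L$. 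Set $\theta=L-C_1$. Every row with $\operatorname{ord}(w_i)<\theta$ holds automatically on that box, because $A_i(t)E+q_i(t)=o(t^\theta)=o(w_i(t))$. The remaining rows have $w_i+q_i\in X^m_\theta$. Lemma~\ref{lem:4.5} with $\lambda=\theta\leq\Lambda$ then gives $E\in X^m_{\theta-D}$, and $L$ is chosen so that $\theta-D>m$.

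There is a smaller gap in the definable-choice step. ``Treating the polynomial part exactly'' means choosing a point in $\{u:A(t)(P(y+t)+u)\leq T_sf(t),\ \|u\|_\infty\leq C_0t^{m+\eta}\}$. This set can be empty even though $K_f(y+t)\neq\varnothing$, because Taylor truncation does not commute with multiplication by the semialgebraic coefficients. For instance, $A(y+t)=(1,-t)^T$ and $f(y+t)=(-1/(1+t),\,t/(1+t))$ give $K_f(y+t)=\{-1/(1+t)\}$, and truncating at any even order $s$ empties it. The paper instead relaxes upward, $f^{[L]}=T_Lf+C_Lt^{L+1}\mathbf 1\geq f$. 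Then $K_f\subset K_{f^{[L]}}$, so the bound $d^+_{y,P}\leq Ct^{m+\eta}$ passes to the relaxed polyhedron and the definable choice is legitimate. Your intermediate $\hat g$ idea also fails as stated: $\max\{(A(t)u_0(t))_i,-Ct^\lambda\}$ with a merely pointwise $u_0$ is not in any $X^m_\lambda$, so Lemma~\ref{lem:4.4} cannot be applied to it.
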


\begin{proof}
We prove the right-hand statement.

Assume first that \emph{(i)} holds. Since
\[
J_y^mF=P,
\]
Taylor's theorem gives
\[
F(y+t)-P(y+t)
=
o(t^m).
\]
Because
\[
F(y+t)\in K_f(y+t),
\]
we obtain
\[
d^+_{y,P}(t)
\leq
\|F(y+t)-P(y+t)\|_\infty
=
o(t^m).
\]
Also
\[
P(y)=F(y)\in K_f(y).
\]
Thus \emph{(ii)} holds.

Conversely, assume \emph{(ii)}. By Corollary \ref{cor:4.3}, there are $\eta>0$ and $C>0$ such that
\begin{equation}
d^+_{y,P}(t)
\leq
Ct^{m+\eta}
\label{eq:distance-power-gain}
\end{equation}
for all sufficiently small $t>0$.

For notational simplicity, we write
\[
A(t)=A(y+t),
\qquad
f(t)=f(y+t).
\]
Choose a large integer $L$. Let
\[
T_Lf(t)
=
\sum_{r=0}^L
\frac{f^{(r)}(y)}{r!}t^r.
\]
Taylor's theorem gives a constant $C_L$ such that
\[
|f_i(t)-(T_Lf)_i(t)|
\leq
C_Lt^{L+1}
\]
for every $i$ and all sufficiently small $t>0$.

Define the relaxed Taylor data
\begin{equation}
f^{[L]}(t)
=
T_Lf(t)
+
C_Lt^{L+1}\mathbf 1.
\label{eq:relaxed-taylor}
\end{equation}
Then
\[
f(t)\leq f^{[L]}(t)
\]
coordinatewise. Hence
\[
K_f(y+t)
\subset
K_{f^{[L]}}(y+t).
\]
Therefore \eqref{eq:distance-power-gain} gives
\[
\operatorname{dist}_\infty
\bigl(
P(y+t),K_{f^{[L]}}(y+t)
\bigr)
\leq
Ct^{m+\eta}.
\]

Consider the semialgebraic set
\[
\mathcal R_0
=
\left\{
(t,R)\in\mathbb R\times\mathbb R^M:
0<t<\varepsilon,\ 
A(t)\bigl(P(y+t)+R\bigr)\leq f^{[L]}(t),\
\|R\|_\infty\leq2Ct^{m+\eta}
\right\}.
\]
After shrinking $\varepsilon$ if necessary, every fiber of $\mathcal R_0$ is nonempty. By semialgebraic choice, there is a semialgebraic selection $R_0(t)$
with
\[
(t,R_0(t))\in\mathcal R_0.
\]
After shrinking the interval again, $R_0$ is smooth for $t>0$, and
\[
\|R_0(t)\|_\infty
=
O(t^{m+\eta}).
\]
By the Puiseux theorem, every component of $R_0$ has Puiseux order at least $m+\eta$. Hence, for $0\leq r\leq m$,
\[
R_0^{(r)}(t)
=
O(t^{m+\eta-r})
\longrightarrow0.
\]
Thus $R_0$ extends to a $C^m$ function at $0$ with zero $m$-jet.

Next, define 
\begin{equation}
w(t)
=
f^{[L]}(t)
-
A(t)\bigl(P(y+t)+R_0(t)\bigr)
\geq0
\label{eq:slack}
\end{equation}
and put
\[
q(t)
=
f(t)-f^{[L]}(t).
\]
Taylor's theorem, applied also to the derivatives through order $m$, gives
\begin{equation}
q_i\in X^m_{L+1},
\qquad
-2C_Lt^{L+1}
\leq
q_i(t)
\leq
0.
\label{eq:q-estimate}
\end{equation}
It remains to find $E(t)$ satisfying
\begin{equation}
A(t)E(t)
\leq
w(t)+q(t).
\label{eq:E-target}
\end{equation}

Set
\[
p_0(t)
=
P(y+t)+R_0(t).
\]
From \eqref{eq:slack},
\[
A(t)p_0(t)
=
f^{[L]}(t)-w(t).
\]
Therefore
\[
A(t)p_0(t)-f(t)
=
-w(t)-q(t).
\]
Since $w(t)\geq0$ and $q_i(t)\geq-2C_Lt^{L+1}$,
\[
\|
(A(t)p_0(t)-f(t))_+
\|_\infty
\leq
2C_Lt^{L+1}.
\]

By Corollary \ref{cor:4.2}, there are constants $K\geq0$ and $C'>0$, depending only on $A$, such that pointwise feasibility of the original system gives, for every sufficiently small $t>0$, a point
\[
v^*(t)\in K_f(y+t)
\]
with
\[
\|v^*(t)-p_0(t)\|_\infty
\leq
C't^{L+1-K}.
\]
Define the pointwise correction
\[
E^*(t)
=
v^*(t)-p_0(t).
\]
No regularity of $E^*$ is required. Since $v^*(t)\in K_f(y+t)$,
\[
A(t)E^*(t)
\leq
f(t)-A(t)p_0(t)
=
w(t)+q(t).
\]
Moreover,
\begin{equation}
\|E^*(t)\|_\infty
\leq
C't^\Lambda,
\qquad
\Lambda=L+1-K.
\label{eq:E-star-box}
\end{equation}

For each row $i$, let
\[
\beta_i := \begin{cases}
    \operatorname{ord}(w_i) &\text{when $w_i\not\equiv0$}
    \\
    +\infty &\text{when $w_i\equiv0$}
\end{cases} \ .
\]
For each nonzero row of $A$, let
\[
\alpha_i
=
\min_{j:A_{ij}\not\equiv0}
\operatorname{ord}(A_{ij}).
\]
Rows that are identically zero are treated separately.

Choose a constant $C_1\geq0$, depending only on $A$ and $K$, so large that, with $\theta=L-C_1$, we have
\[
\Lambda\geq\theta
\text{\quad and \quad }
\Lambda+\alpha_i>\theta
\]
for every nonzero row. This choice is possible independently of $L$, because these conditions are equivalent to
\[
C_1\geq K-1
\text{\quad and \quad }
C_1>K-1-\alpha_i
\]
for the finitely many nonzero rows.

Suppose first that
\[
\beta_i<\theta.
\]
Since $w_i\geq0$ and $w_i\not\equiv0$, the Puiseux theorem gives
\[
w_i(t)
\sim
c_it^{\beta_i},
\qquad
c_i>0.
\]
On the box
\[
\|E\|_\infty
\leq
C't^\Lambda,
\]
the choice of $\theta$ gives
\[
A_i(t)E
=
o(t^\theta).
\]
Also, by \eqref{eq:q-estimate},
\[
q_i(t)
=
O(t^{L+1})
=
o(t^\theta).
\]
Since $\beta_i<\theta$,
we have
\[
t^\theta=o(t^{\beta_i}),
\]
and therefore, uniformly for all $E$ in the box,
\[
|A_i(t)E|+|q_i(t)|
=
o(w_i(t)).
\]
Hence the $i$-th inequality in \eqref{eq:E-target} is automatic for every $E$ in the box and all sufficiently small $t>0$.

Now consider the remaining rows, for which
\[
\beta_i\geq\theta.
\]
If $w_i\not\equiv0$, the semialgebraic Puiseux estimates give
\[
w_i\in X^m_{\beta_i}
\subset
X^m_\theta.
\]
If $w_i\equiv0$, then trivially
\[
w_i\in X^m_\theta.
\]
Also, since $C_1\geq0$,
\[
L+1>\theta,
\]
so \eqref{eq:q-estimate} gives
\[
q_i\in X^m_{L+1}
\subset
X^m_\theta.
\]
Therefore
\[
w_i+q_i
\in
X^m_\theta
\]
for every remaining row.

Consider the subsystem consisting of these remaining rows together with the box constraints
\[
|E_j|
\leq
C't^\Lambda,
\qquad
j=1,\ldots,M.
\]
It is pointwise nonempty because $E^*(t)$ satisfies it. Since $\Lambda\geq\theta$,
Lemma \ref{lem:4.5} applies with $\lambda=\theta$.
Thus there is a constant
\[
D=D(A,M,m),
\]
independent of $L$, and a $C^m$ selection
\[
E(t)
\]
satisfying the subsystem and the box, with
\[
E_j\in X^m_{\theta-D},
\qquad
j=1,\ldots,M.
\]

Choose $L$ so large that
\[
\theta-D>m.
\]
Since
\[
\theta=L-C_1
\]
and both $C_1$ and $D$ are independent of $L$, this is possible. Then every component of $E$ extends to $t=0$ as a $C^m$ function with zero $m$-jet. The rows discarded above are automatic throughout the box, so $E$ satisfies the full system \eqref{eq:E-target}.

Finally set
\[
R(t)
=
R_0(t)+E(t)
\]
and
\[
F(y+t)
=
P(y+t)+R(t),
\qquad
F(y)=P(y).
\]
Both $R_0$ and $E$ are $C^m$ at $0$ with zero $m$-jet. Hence
\[
F\in C^m([y,y+\varepsilon'),\mathbb R^M)
\]
for a sufficiently small $\varepsilon'>0$, and
\[
J_y^mF=P.
\]
Moreover,
\[
\begin{aligned}
A(t)F(y+t)
&=
A(t)
\bigl(
P(y+t)+R_0(t)+E(t)
\bigr)
\\
&\leq
f^{[L]}(t)-w(t)+w(t)+q(t)
\\
&=
f(t).
\end{aligned}
\]
At $t=0$, feasibility follows from
\[
P(y)\in K_f(y).
\]
This proves the right-hand implication. The left-hand case is identical after replacing $y+t$ by $y-t$.
\end{proof}

\subsection{Finite exceptional-point conditions}

\begin{proposition}[Finite one-sided endpoint description]\label{pro:4.7}
Fix $m\in\{0,1,2\}$. For every $y\in\Sigma_A$ and each side
\[
\varepsilon\in\{-,+\},
\]
there is an integer $s_y$ and a finite Boolean formula
\[
\mathcal B^\varepsilon_y
\bigl(
P,J_y^{(s_y)}f
\bigr)
\]
built from homogeneous linear sign conditions such that, assuming pointwise feasibility on the corresponding punctured regular interval, $P\in\mathcal P^m(\mathbb R,\mathbb R^M)$
is realized by a feasible one-sided $C^m$ selection if and only if
$\mathcal B^\varepsilon_y
\bigl(
P,J_y^{(s_y)}f
\bigr)$
holds.
\end{proposition}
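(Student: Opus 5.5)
The plan is to combine Theorem~\ref{thm:4.6} with Corollary~\ref{cor:4.3}. Fix $y\in\Sigma_A$ and a side, say $\varepsilon=+$; the left side is symmetric. Assume, as in the statement, that $K_f(y+t)\neq\varnothing$ for all small $t>0$. By Theorem~\ref{thm:4.6}, a polynomial $P\in\mathcal P^m(\mathbb R,\mathbb R^M)$ is the $m$-jet at $y$ of a feasible one-sided $C^m$ selection if and only if two conditions hold:
\[
P(y)\in K_f(y)
\qquad\text{and}\qquad
d^+_{y,P}(t)=o(t^m)\quad(t\downarrow0).
\]
It therefore suffices to show that each of these conditions is a finite Boolean combination of homogeneous linear sign conditions in $\bigl(P,J_y^{(s_y)}f\bigr)$ for a suitable $s_y$. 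Then $\mathcal B^+_y$ is defined as their conjunction.

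\textbf{The point condition.} The condition $P(y)\in K_f(y)$ reads
\[
\sum_{j=1}^M A_{ij}(y)P_j(y)\leq f_i(y),\qquad i=1,\ldots,N.
\]
Here $A_{ij}(y)$ are fixed real numbers. The values $P_j(y)$ are linear in the coefficient vector of $P$, and $f_i(y)$ is the zeroth-order part of $J_y^{(s)}f$ for any $s\geq 0$. So each of these $N$ inequalities is of the form ``$\ell<0$ or $\ell=0$'' with $\ell$ a homogeneous linear functional in $(P,J_y^{(s)}f)$. No semialgebraic partition in $x$ is needed, since $y$ is a single point.

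\textbf{The decay condition.} Lemma~\ref{lem:4.1} applies because we assume feasibility on the punctured interval. It gives finitely many semialgebraic multipliers $\lambda_\alpha(t)\geq0$, depending only on the germ of $A$, with
\[
d^+_{y,P}(t)=\max_\alpha\,[\Lambda_\alpha(t;P,f)]_+ .
\]
Hence $d^+_{y,P}=o(t^m)$ holds if and only if $[\Lambda_\alpha]_+=o(t^m)$ for every $\alpha$.

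Each $\Lambda_\alpha$ has the form \eqref{3.2}, with auxiliary parameter $z$ the coefficients of $P$. To see this, expand $P(y+t)=\sum_{k\leq m}c_k t^k$. The coefficients multiplying the $c_k$ are the semialgebraic germs $\lambda_\alpha(t)^TA(y+t)t^k$, and the coefficients multiplying $f_i(y+t)$ are the germs $-\lambda_{\alpha,i}(t)$. Corollary~\ref{cor:3.4}, i.e.\ the content of Corollary~\ref{cor:4.3}, then yields a common $s_y$ and a Boolean sign formula in $\bigl(P,J_y^{(s_y)}f\bigr)$ equivalent to the decay condition. We take $s_y$ to be at least the order used for the point condition, padding with zero coefficients as needed. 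The conjunction of the two formulas is $\mathcal B^+_y$.

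\textbf{Main obstacle.} The one point requiring care is that the multipliers, and hence the formula, must not depend on $f$. Lemma~\ref{lem:4.1} invokes feasibility of $K_f(y+t)$ only to exclude recession directions. The extreme-point candidates $\lambda_\beta(t)$ come from the formal active-set systems, which involve only $A$. So the list of $\lambda_\alpha$, and therefore the Boolean formula, depends only on $A$, $m$, and $y$. I would make this explicit. Feasibility on the punctured interval is a standing hypothesis of the proposition, so it need not be encoded in $\mathcal B^\varepsilon_y$; it is handled globally via Corollary~\ref{cor:2.2}.
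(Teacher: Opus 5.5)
Your proposal is correct and matches the paper's proof. The paper likewise uses Theorem~\ref{thm:4.6} to reduce admissibility to $P(y)\in K_f(y)$ together with $d^+_{y,P}(t)=o(t^m)$. The first condition is a conjunction of homogeneous linear inequalities, and the second is finite Boolean-linear by Corollary~\ref{cor:4.3}, which packages Lemma~\ref{lem:4.1} and Corollary~\ref{cor:3.4}. Your remarks on writing $\Lambda_\alpha$ in the form \eqref{3.2} and on the $f$-independence of the multipliers make explicit what the paper's short proof leaves implicit.
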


\begin{proof}
By Theorem \ref{thm:4.6}, right-hand admissibility is equivalent to
\[
A(y)P(y)\leq f(y)
\text{\quad and \quad}
d^+_{y,P}(t)=o(t^m).
\]
The first condition is a conjunction of homogeneous linear inequalities in $P$ and $f(y)$,
and the second is finite Boolean-linear by Corollary \ref{cor:4.3}. Combining them gives the desired formula. The left-hand case is identical.
\end{proof}

\begin{lemma}[Boolean-linear projection]\label{lem:4.8}
Let $u \in \R^q$, $2 \in \R^r$, and let $\mathcal B(u,w)$ be a finite Boolean combination of homogeneous linear sign conditions. Then
\[
\exists w\in\mathbb R^r
\qquad
\mathcal B(u,w)
\]
is equivalent to a finite Boolean combination of homogeneous linear sign conditions in $u$.
\end{lemma}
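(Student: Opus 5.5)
The plan is to reduce to quantifier elimination over the ordered field $\mathbb R$ for formulas with linear atoms, then keep track of homogeneity. First put $\mathcal B(u,w)$ in disjunctive normal form. Each disjunct is a finite conjunction of conditions $\ell(u,w)>0$, $\ell(u,w)=0$, $\ell(u,w)<0$, where $\ell$ is a homogeneous linear form. Since the existential quantifier distributes over disjunctions, it suffices to treat one conjunction $\mathcal C(u,w)$. Rewrite each condition as $\ell\ge 0$, $\ell\le0$ or $\ell>0$, so that no real generality is lost and the defining set is a relatively open convex cone piece.

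Next, eliminate the coordinates of $w$ one at a time by strict/non-strict Fourier--Motzkin, as in Lemma \ref{lemma:2.1} but with constant coefficients. Write $w=(w',s)$. Each atom has the form $a s + b\cdot w' + c\cdot u \ \square\ 0$, where $\square\in\{>,\geq\}$ and $a,b,c$ are constant.

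Atoms with $a=0$ are kept unchanged. Atoms with $a>0$ give lower bounds on $s$, and atoms with $a<0$ give upper bounds, each of the form $s \ \square\ \ell_\alpha(u,w')$ with $\ell_\alpha$ linear and homogeneous. A real $s$ exists if and only if every lower bound is at most every upper bound, with strict inequality whenever at least one of the two bounds is strict. If there are no lower or no upper bounds, there is no condition at all. Each resulting condition $\ell_\alpha-\ell_\beta\ \square\ 0$ is again a homogeneous linear sign condition in $(u,w')$, because differences of homogeneous linear forms are homogeneous linear.

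Iterating $r$ times yields a finite conjunction of homogeneous linear sign conditions in $u$ alone. Taking the disjunction over the DNF pieces then gives the claim.

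The only point needing care is the strict-inequality bookkeeping in the elimination step: a pair of a strict and a non-strict bound must produce a strict condition. Also, atoms of the form $\ell=0$ with $a\neq0$ should preferably be handled by direct substitution $s=-(b\cdot w'+c\cdot u)/a$, which preserves homogeneity. With that convention the induction is routine, and no constant terms ever arise, since every atom starts with zero constant term and all operations are linear combinations and substitutions.
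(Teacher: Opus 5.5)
Your proposal is correct and follows the same route as the paper: disjunctive normal form, distributing the existential quantifier over the disjuncts, then Fourier--Motzkin elimination of the coordinates of $w$ one at a time while tracking strict versus weak bounds. You also spell out the strict/weak pairing rule and why homogeneity is preserved, which the paper leaves implicit. One aside is wrong but unused: a conjunction mixing weak and strict inequalities need not define a ``relatively open'' cone piece (e.g.\ $\{w_1\geq0,\ w_2>0\}$), and nothing in your argument relies on that claim.
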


\begin{proof}
Put $\mathcal B$ in disjunctive normal form. Projection commutes with finite unions, so it suffices to project a conjunction of linear equalities and strict or weak linear inequalities. Eliminate the coordinates of $w$ one at a time by Fourier--Motzkin elimination, keeping track of strict versus weak bounds. Each step again produces a finite Boolean combination of homogeneous linear sign conditions.
\end{proof}

At $y\in\Sigma_A$, define the $C^m$ compatibility condition
\begin{equation}
\exists
P\in\mathcal P^m(\mathbb R,\mathbb R^M)
\quad
\left[
A(y)P(y)\leq f(y)
\ \wedge\
\mathcal B^-_y
\bigl(
P,J_y^{(s_y)}f
\bigr)
\ \wedge\
\mathcal B^+_y
\bigl(
P,J_y^{(s_y)}f
\bigr)
\right].
\label{eq:exceptional-compatibility}
\end{equation}

\begin{corollary}[Finite exceptional-point compatibility]\label{cor:4.9}
Fix $m\in\{0,1,2\}$. For every $y\in\Sigma_A$, condition \eqref{eq:exceptional-compatibility} is equivalent to a finite Boolean combination of homogeneous linear sign conditions on a finite jet
$J_y^{(s_y)}f$.
All coefficients are determined by $A,M,N,m$.
\end{corollary}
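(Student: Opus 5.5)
The plan is to combine Proposition~\ref{pro:4.7} with Lemma~\ref{lem:4.8}, treating the coefficients of $P$ as the variables to be eliminated.

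\emph{Step 1 (coordinates).} Identify $P\in\mathcal P^m(\mathbb R,\mathbb R^M)$ with its coefficient vector
\[
w=\bigl(P^{(r)}_j(y)/r!\bigr)_{0\leq r\leq m,\ 1\leq j\leq M}\in\mathbb R^{M(m+1)}.
\]
Set $u=J_y^{(s_y)}f$. If $\mathcal B^-_y$ and $\mathcal B^+_y$ come with different jet orders, take $s_y$ to be the larger one and pad with zero coefficients.

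\emph{Step 2 (the bracketed formula is Boolean-linear).} The condition $A(y)P(y)\leq f(y)$ is a conjunction of $N$ homogeneous linear inequalities
\[
\sum_j A_{ij}(y)w_{0,j}-f_i(y)\leq 0,
\]
whose coefficients $A_{ij}(y)$ are fixed real numbers. By Proposition~\ref{pro:4.7}, each of $\mathcal B^\pm_y$ is already a finite Boolean combination of homogeneous linear sign conditions in $(w,u)$. Its coefficients come from Lemma~\ref{lem:4.1}, Lemma~\ref{lem:3.2} and Corollary~\ref{cor:3.4}, so they are determined by the germs of $A$ at $y$ and by $m$. The conjunction of the three formulas is therefore a finite Boolean-linear formula $\mathcal B(u,w)$.

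\emph{Step 3 (elimination).} Apply Lemma~\ref{lem:4.8} to $\exists w\ \mathcal B(u,w)$. First put $\mathcal B$ in disjunctive normal form. Then run Fourier--Motzkin elimination on each conjunct, keeping track of strict versus weak inequalities. This yields a finite Boolean combination of homogeneous linear sign conditions in $u=J_y^{(s_y)}f$ alone. Each elimination step only takes positive combinations of the given rows, so the output coefficients are rational expressions in the input coefficients. Hence they depend only on $A,M,N,m$, and not on $f$.

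\emph{Main obstacle.} The delicate point is a case where a side has no adjacent regular interval, for instance a degenerate arrangement, or where pointwise feasibility fails on one side. In that case $\mathcal B^\varepsilon_y$ must be interpreted correctly. I would handle it as follows. Take $\mathcal B^\varepsilon_y$ to be the formula produced by Proposition~\ref{pro:4.7}, regarded purely as a formula; the compatibility condition \eqref{eq:exceptional-compatibility} is defined exactly in these terms. Then the corollary is a purely syntactic consequence of Lemma~\ref{lem:4.8}. The feasibility hypotheses are needed only later, when the condition is interpreted in Section~\ref{sec:5}.
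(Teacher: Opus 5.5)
Your proposal is correct and is the paper's argument: the paper's proof simply applies Lemma~\ref{lem:4.8} to eliminate the coefficients of $P\in\mathcal P^m(\mathbb R,\mathbb R^M)$ from the Boolean-linear formula in \eqref{eq:exceptional-compatibility}. Your extra bookkeeping is consistent with the paper, namely the common jet order $s_y$, the check that the output coefficients depend only on $A,M,N,m$, and reading $\mathcal B^\pm_y$ as fixed formulas. One of your worries is moot: since $\Sigma_A$ is finite, every $y\in\Sigma_A$ has regular intervals on both sides, so the case of a side with no adjacent regular interval cannot occur.
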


\begin{proof}
Apply Lemma \ref{lem:4.8} to eliminate the finite-dimensional polynomial jet $P\in\mathcal P^m(\mathbb R,\mathbb R^M)$.
\end{proof}

\section{Proof of the positive part of Theorem \ref{thm:main-theorem}} \label{sec:5}

Fix
\[
m\in\{0,1,2\}.
\]
We prove the finite differential characterization of global $C^m$ solvability.

Refine $\Sigma_A$, if necessary, by the finitely many zero-dimensional cells occurring in the Fourier--Motzkin description of pointwise feasibility. Let $\mathcal C$ be the finite semialgebraic partition of $\mathbb R$ consisting of the regular intervals and the singleton cells $\{y: y \in \Sigma_A\}$.

On every regular interval $I$, Corollary \ref{cor:2.2} gives a finite conjunction $\mathcal F_I(f(x))$
of homogeneous linear inequalities such that
\begin{equation}
\mathcal F_I(f(x))
\quad\Longleftrightarrow\quad
K_f(x)\neq\varnothing.
\label{eq:regular-feasibility}
\end{equation}
The coefficients occurring in $\mathcal F_I$ are semialgebraic on $I$ and depend only on the fixed matrix $A$.

At every $y\in\Sigma_A$, Corollary \ref{cor:4.9} gives a finite Boolean formula
\[
\mathcal C_y
\bigl(
J_y^{(s_y)}f
\bigr)
\]
in homogeneous linear sign conditions, equivalent to the $C^m$ exceptional-point compatibility condition.

Set
\[
\bar m
=
\max
\left(
\{m\}
\cup
\{s_y:y\in\Sigma_A\}
\right).
\]
Since $\Sigma_A$ is finite, $\bar m<\infty$. By adding zero coefficients to expressions of lower differential order, all formulas may be regarded as sign formulas in derivatives of $f$ through order $\bar m$.

For a regular cell $I\in\mathcal C$, define
\[
\mathcal S_I
=
\mathcal F_I.
\]
For a singleton cell $\{y\}$, define
\[
\mathcal S_{\{y\}}
=
\mathcal C_y.
\]
These are the cellwise formulas in the positive part of Theorem \ref{thm:main-theorem}.

\subsection{Necessity}

Suppose
$F\in C^m(\mathbb R,\mathbb R^M)$
satisfies $A(x)F(x)\leq f(x)$ for every $x \in \R$.

If $x$ lies in a regular interval $I$, then $F(x) \in K_f(x)$, so $K_f(x)\neq\varnothing$
and \eqref{eq:regular-feasibility} gives
\[
\mathcal F_I(f(x)).
\]
Thus the required cellwise condition holds at every point of every regular interval.

Now let $y\in\Sigma_A$ and put
\[
P_y
=
J_y^mF
\in
\mathcal P^m(\mathbb R,\mathbb R^M).
\]
The restriction of $F$ to the left of $y$ is a feasible one-sided $C^m$ selection with jet $P_y$, and the restriction of $F$ to the right of $y$ is also a feasible one-sided $C^m$ selection with the same jet $P_y$.

By Proposition \ref{pro:4.7}, $P_y$ satisfies both one-sided endpoint formulas. Moreover,
\[
P_y(y)
=
F(y)\in K_f(y),
\]
so $P_y$ witnesses the exceptional-point compatibility condition \eqref{eq:exceptional-compatibility}. Therefore
\[
\mathcal C_y
\bigl(
J_y^{(s_y)}f
\bigr)
\]
holds by Corollary \ref{cor:4.9}.

Thus every cellwise condition in Theorem \ref{thm:main-theorem}  is necessary.

\subsection{Sufficiency}

Conversely, assume that all of the cellwise conditions hold.

For every regular interval $I$, condition
$\mathcal F_I(f(x))$
holds for every $x\in I$. By \eqref{eq:regular-feasibility},
\[
K_f(x)\neq\varnothing
\qquad
(x\in I).
\]
Proposition \ref{pro:2.6} therefore gives
$G_I
\in
C^m(I,\mathbb R^M)$
such that
\[
G_I(x)\in K_f(x)
\qquad
(x\in I).
\]

For every exceptional point
$y\in\Sigma_A$,
the formula
$\mathcal C_y
\bigl(
J_y^{(s_y)}f
\bigr)$
holds. By Corollary \ref{cor:4.9}, there exists a polynomial
$P_y
\in
\mathcal P^m(\mathbb R,\mathbb R^M)$
satisfying the two one-sided endpoint compatibility conditions and
\[
P_y(y)\in K_f(y).
\]
By Theorem \ref{thm:4.6}, there exist feasible one-sided $C^m$ selections
\[
F_y^-
\quad\text{and}\quad
F_y^+
\]
defined on sufficiently small left- and right-hand neighborhoods of $y$, respectively, such that
\begin{equation}
J_y^mF_y^-
=
J_y^mF_y^+
=
P_y.
\label{eq:matching-jets}
\end{equation}

We now patch the regular-interval selections to these endpoint selections.

Let $I=(a,b)$
be a regular interval. When $a$ is finite, use the right-hand endpoint selection $F_a^+$.
When $b$ is finite, use the left-hand endpoint selection
$F_b^-$.
Shrink the endpoint neighborhoods, if necessary, so that the left and right endpoint neighborhoods are disjoint inside $I$.

Choose nonnegative $C^\infty$ cutoff functions $\chi_a$, $\chi_b$
supported in the corresponding endpoint neighborhoods and satisfying
\begin{itemize}
    \item $\chi_a=1$ on a smaller neighborhood of $a$, and
    \item $\chi_b=1$ on a smaller neighborhood of $b$.
\end{itemize}
Modify the supports if necessary so that
\[
\chi_a+\chi_b\leq1,
\]
and put $\chi_0
=
1-\chi_a-\chi_b$.
Thus
\[
\chi_a,\chi_0,\chi_b\geq0,
\qquad
\chi_a+\chi_0+\chi_b=1.
\]

With the evident omissions when an endpoint is infinite, define
\begin{equation}
F_I
=
\chi_aF_a^+
+
\chi_0G_I
+
\chi_bF_b^-.
\label{eq:interval-patching}
\end{equation}
Each term is $C^m$ wherever it appears, hence
\[
F_I\in C^m(I,\mathbb R^M).
\]

At every $x\in I$, all functions appearing in \eqref{eq:interval-patching} with nonzero coefficient belong to the convex polyhedron $K_f(x)$.
Because the coefficients $\chi_a(x),\chi_0(x),\chi_b(x)$
are nonnegative and sum to $1$, convexity gives $F_I(x)\in K_f(x)$.
Therefore,
\[
A(x)F_I(x)\leq f(x)
\text{\qquad for all $x \in I$.}
\]

Moreover, since $\chi_a=1$ near a finite left endpoint $a$ and the other cutoff functions vanish there,
\[
F_I=F_a^+
\]
near $a$. Similarly,
\[
F_I=F_b^-
\]
near a finite right endpoint $b$.

Define the global function $F$ by
\begin{equation*}
    F(y) = \begin{cases}
        F_I(y) &\text{ if $y \in I$ and $I$ is a regular interval}
        \\
        P_y(y) &\text{ if $y \in \Sigma_A$}
    \end{cases} \ .
\end{equation*}

We verify the regularity at the exceptional points. Fix $y\in\Sigma_A$.
On a sufficiently small interval immediately to the left of $y$, the global function $F$ agrees with
$F_y^-$,
while on a sufficiently small interval immediately to the right of $y$, it agrees with
$F_y^+$.
By \eqref{eq:matching-jets},
\[
J_y^mF_y^-
=
J_y^mF_y^+
=
P_y.
\]
Thus the left- and right-hand derivatives of $F$ through order $m$ exist at $y$ and agree, with
\[
F^{(r)}(y)
=
P_y^{(r)}(y),
\qquad
0\leq r\leq m.
\]
Consequently, $F\in C^m(\mathbb R,\mathbb R^M)$.

Feasibility holds on every regular interval by construction. At every exceptional point,
\[
F(y)
=
P_y(y)
\in
K_f(y).
\]
Hence
\[
A(x)F(x)\leq f(x)
\qquad
(x\in\mathbb R).
\]

This proves sufficiency. Therefore, for each
\[
m\in\{0,1,2\},
\]
global $C^m$ solvability is equivalent to the finite collection of cellwise Boolean sign conditions constructed above. This completes the proof of the positive part of the Main Theorem \ref{thm:main-theorem}.
\section{Sharpness of \texorpdfstring{$C^2$}{C2} regularity} \label{sec:6}
\label{sect:sharp C2}

Recall from the introduction that a $C^3(\R) \subset B^3(\R)$ Riesz decomposition associated with a triple of functions $(f_1,f_2,g) \in C^3(\R)^3$ with
\begin{equation*}
    g \leq f_1 + f_2
\end{equation*}
is a pair $(g_1,g_2) \in C^3(\R)^2$ such that
    \begin{equation*}
        g = g_1 + g_2 \text{\quad and \quad} 0 \leq g_i \leq f_i \text{ for $i = 1,2$.}
    \end{equation*}
As before, $B^3$ denotes the subspace of $C^2$ functions $f$ such that $f'''$ exists everywhere and is bounded on compact sets. 

\begin{example}\label{example:NagelRudin}

    We revisit the example given by Nagel and Rudin~\cite{NagelRudin1974}. Let $\set{\psi_k}$ be a sequence of nonnegative univariate smooth functions with pairwise disjoint compact supports in $[0,1]$, such that $\psi_k \equiv 1$ on $(t_k-\delta_k,t_k + \delta_k)$ for some sufficiently small and fast-decreasing $\delta_k$. Choose constants $c_k > 0$ so small such that $\norm{(c_k\psi_k)^{(k)}}_{L^\infty} \leq k^{-2}$. Choose $\eps_k \in (0,\delta_k)$ so that
\begin{equation*}
    \lim_{k\to\infty}\frac{\eps_k}{c_k} = 0.
\end{equation*}
Define
\begin{itemize}
    \item $f_1 := \sum_kf_{1,k}$ where $f_{1,k}(x) := c_k(x-t_k)^2\psi_k(x)$;
    \item $f_2 := \sum_k f_{2,k}$ where $f_{2,k}(x):= c_k\eps_k^2 \psi_k(x)$;
    \item $g(x) := \sum_k g_k$ where $g_k(x) := \frac{1}{2}c_k(x-t_k + \eps_k)^2\psi_k(x)$.
\end{itemize}
It was shown in~\cite{NagelRudin1974} that the triple 
\begin{center}
    \text{$(f_1,f_2,g)$ does \textbf{not} admit a $B^3 $ (and therefore, $ C^3$) Riesz decomposition.}
\end{center}
However, it is worth pointing out that for each $k$, the triple
\begin{center}
    $(f_{1,k}, f_{2,k},g_k)$ admits a $C^3$ (in fact, $C^\infty$) Riesz decomposition
\end{center}
given explicitly by 
\begin{equation*}
    g_{1,k}(x) = \frac{1}{2}c_k\psi_k(x) \frac{s^2(s+\eps_k)^2}{s^2 + \eps_k^2}
    \text{\quad and \quad}
    g_{2,k}(x) = \frac{1}{2}c_k\psi_k(x)\frac{\eps_k^2(s+\eps_k)^2}{s^2 + \eps_k^2}
\end{equation*}
where $s = x - t_k$.
\end{example}

\begin{lemma}\label{lem:lifting}
    Let $(f_1,f_2,g)$ and $f_{i,k}, g_k$ be as in Example~\ref{example:NagelRudin}. For every $x \in \R$, there exists a triple of smooth functions
    \begin{equation*}
        (\tilde f_1^x, \tilde f_2^x, \tilde g^x) \in C^\infty(\R)^3
    \end{equation*}
    admitting a $C^\infty$ Riesz decomposition, such that
    \begin{equation*}
        \jet{\infty}{x}(\tilde f_1^x, \tilde f_2^x, \tilde g^x) \equiv \jet{\infty}{x}(f_1,f_2,g)
    \end{equation*}
    where $\jet{\infty}{x}$ is the formal Taylor series at $x$. Moreover, we can take
    \begin{equation}
        (\tilde f_1^x, \tilde f_2^x, \tilde g^x) = \begin{cases}
            (f_{1,k},f_{2,k}, g_k)
            &\text{ if $x \in \supp(\psi_k)$ for some $k$}
            \\
            (0,0,0) &\text{ otherwise}
        \end{cases} \ . 
        \label{eq:RD triple lift}
    \end{equation}
\end{lemma}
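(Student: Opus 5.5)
The plan is to verify the lemma directly with the explicit choice \eqref{eq:RD triple lift}, splitting into two cases according to whether $x$ lies in the support of some $\psi_k$.

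\emph{Case 1: $x\in\supp(\psi_k)$ for some $k$.} The supports of the $\psi_j$ are pairwise disjoint compact sets. So $\supp(\psi_k)$ has a neighborhood $U$ on which every $\psi_j$ with $j\neq k$ vanishes, provided $x$ is not an accumulation point of the other supports. This needs a short check: the supports can accumulate only at limit points of the $t_j$. Since $\supp(\psi_k)$ is compact and the other supports are disjoint from it, an accumulation point of $\bigcup_{j\neq k}\supp\psi_j$ lying in $\supp\psi_k$ would have to be a limit of points $t_j$, with supports shrinking. To rule this out, I will use (and if needed impose, as is implicit in the construction) that the $t_j$ converge to a single point $t_\ast$ not in any $\supp\psi_k$. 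In that situation each $\supp\psi_k$ has positive distance from $\bigcup_{j\neq k}\supp\psi_j$. On $U$ the series defining $f_1,f_2,g$ then reduce to the single terms $f_{1,k},f_{2,k},g_k$, so their full Taylor series at $x$ coincide. By Example~\ref{example:NagelRudin}, the triple $(f_{1,k},f_{2,k},g_k)$ admits the explicit $C^\infty$ Riesz decomposition $(g_{1,k},g_{2,k})$. I still need to check that this is one: $g_{1,k}+g_{2,k}=\tfrac12 c_k\psi_k(s+\eps_k)^2=g_k$ is immediate. Next, $0\le g_{1,k}\le f_{1,k}$ reduces to $\tfrac12(s+\eps_k)^2\le s^2+\eps_k^2$, which is $(s-\eps_k)^2\ge0$. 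Similarly $g_{2,k}\le f_{2,k}$ reduces to the same inequality. Smoothness holds because $s^2+\eps_k^2>0$.

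\emph{Case 2: $x\notin\bigcup_k\supp\psi_k$.} Here the lift is $(0,0,0)$, which trivially has the Riesz decomposition $(0,0)$. I must show that every derivative of $f_1,f_2,g$ vanishes at $x$. If $x$ has a neighborhood missing all supports, this is immediate. The remaining possibility is that $x$ is an accumulation point $t_\ast$ of the supports; this is the main obstacle. There I would use the normalization $\|(c_k\psi_k)^{(k)}\|_{L^\infty}\le k^{-2}$ together with Leibniz, interpolation, and the fact that $(x-t_k)^2$ and $(x-t_k+\eps_k)^2$ are bounded with bounded derivatives on $[0,1]$. These give bounds on $\|f_{i,k}^{(r)}\|_\infty$ and $\|g_k^{(r)}\|_\infty$ tending to $0$ as $k\to\infty$ for each fixed $r$. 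The step I expect to be hardest is that the normalization only controls the $k$-th derivative. To handle lower derivatives, I will use that $c_k\psi_k$ is compactly supported in $[0,1]$, so its lower derivatives are controlled by the $k$-th one via repeated integration: $\|h^{(r)}\|_\infty\le\|h^{(k)}\|_\infty$ for $r\le k$ and $\supp h\subset[0,1]$. Granting this, $f_1,f_2,g$ are $C^\infty$. For each $r$, the value $f_1^{(r)}(t_\ast)$ is a limit of values $f_{1,k}^{(r)}(y_k)$ with $y_k\to t_\ast$, and the series converges uniformly with terms tending to zero. Together with $f_1^{(r)}(y)=0$ on the gaps, continuity forces $f_1^{(r)}(t_\ast)=0$, and likewise for $f_2$ and $g$. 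Hence the jets of $(f_1,f_2,g)$ at $x$ vanish and agree with those of $(0,0,0)$, completing the proof.
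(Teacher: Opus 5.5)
Your two-case structure (single-bump lift on $\supp\psi_k$, zero lift elsewhere) matches the paper's. Your direct check that $(g_{1,k},g_{2,k})$ is a $C^\infty$ Riesz decomposition, via $(s-\eps_k)^2\ge0$, is correct. So is your derivation of the smoothness of $f_1,f_2,g$ from $\|(c_k\psi_k)^{(k)}\|_{L^\infty}\le k^{-2}$.

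\textbf{The gap is in Case 1.} You get agreement of the jets only by \emph{imposing} that the $t_j$ converge to a single point $t_\ast$ outside every support, so that each $\supp\psi_k$ is isolated from the others. Example~\ref{example:NagelRudin} does not give you this. It only requires pairwise disjoint compact supports in $[0,1]$ with $\psi_k\equiv1$ near $t_k$. For instance, $\supp\psi_1=[0,\tfrac12]$ together with tiny bumps around $t_j=\tfrac12+2^{-j}$, $j\ge2$, is admissible. Then $x=\tfrac12\in\supp\psi_1$ has no neighborhood on which all $\psi_j$ with $j\neq1$ vanish. Your remark that the supports accumulate only at limit points of the $t_j$ also assumes that $\supp\psi_j$ shrinks around $t_j$, which is not given. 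As written, you prove the lemma only for a special subclass of the triples it is stated for.

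\textbf{The missing idea.} You do not need a neighborhood of $x$ free of the other bumps. It is enough to have an open set accumulating at $x$ on which they vanish. Put $R_i=f_i-f_{i,k}$ and $R_g=g-g_k$. By disjointness of the supports, every $\psi_l$ with $l\neq k$ vanishes on the open set $\{\psi_k\neq0\}$. Hence $R_1,R_2,R_g$ vanish identically there, together with all their derivatives. Since $x\in\supp\psi_k=\overline{\{\psi_k\neq0\}}$ and these remainders are $C^\infty$ (which you have shown), continuity gives $R^{(r)}(x)=0$ for every $r$. This is the paper's Case A.2, and it needs no hypothesis on the $t_j$.

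\textbf{Case 2 can be shortened.} Your series converges in every $C^r$, so for $x$ outside every closed set $\supp\psi_k$ you get $f_1^{(r)}(x)=\sum_k f_{1,k}^{(r)}(x)=0$ directly, and likewise for $f_2$ and $g$. This works at any such $x$, not just at a single limit point $t_\ast$.
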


\begin{proof}

    By Example~\ref{example:NagelRudin}, for each $k$, the triple $(f_{1,k}, f_{2,k}, g_k)$ admits a $C^\infty$ Riesz decomposition. The triple $(0,0,0)$ clearly admits a $C^\infty$ Riesz decomposition. It remains to check that the definition \eqref{eq:RD triple lift} gives the same Taylor series.

    We distinguish two cases.

    \textbf{Case A.} Suppose $x \in \supp(\psi_k)$ for some $k$ and fix such $k$. Let $R_i := f_i - f_{i,k}$ and $R_g := g - g_k$. 
    \begin{enumerate}
        \item If $x$ is in the interior of $\supp(\psi_k)$, then $\psi_l \equiv 0$ for all $l \neq k$, the conclusion holds.
        \item If $x$ is not in the interior, there exists a sequence $x_n \to x$ with $\psi_k(x_n) > 0$ such that $\psi_l(x_n)= 0$ for all $l \neq k$.
        This means that
        \begin{equation*}
        \eqindent
            R_1(x_n) = R_2(x_n) = R_g(x_n) = 0
        \end{equation*}
        Since $R_1, R_2, R_g$ are smooth, we have
        \begin{equation*}
             \jet{\infty}{x}R_1 \equiv \jet{\infty}{x}R_2\equiv \jet{\infty}{x}R_g \equiv 0
        \end{equation*}

    \end{enumerate}

    \textbf{Case B.} Suppose $x \notin \bigcup_k \supp(\psi_k)$. We have $f_1(x) = f_2(x) = g(x) = 0$. We then examine the flatness at $x$.
    \begin{itemize}
        \item If $x \notin \overline{\bigcup_k \supp(\psi_k)}$, the conclusion is immediate.
        \item Otherwise, we may argue as in Case A2 and conclude that
        \begin{equation*}
            \jet{\infty}{x}(f_1,f_2,g) \equiv \jet{\infty}{x}(0,0,0).
        \end{equation*}
        Note that $(0,0,0)$ admits the trivial Riesz decomposition.
    \end{itemize}
    This concludes the proof of the lemma. 
\end{proof}

\begin{theorem}\label{thm:C3 sharp}
    Let $A(x) \equiv A = (-1 , 1 , -1 , 1)$ be a constant matrix. There do not exist an integer $\bar m$ and finitely many linear ordinary differential operators
    \begin{equation*}
        L_\nu b(x) = \sum_{i=1}^4 \sum_{r=0}^{\bar m} a_{\nu i r }(x) b_i^{(r)}(x)
        \text{\quad $\nu = 1, \cdots, K$,}
    \end{equation*}
    where each $a_{\nu i r }:\R \to \R$ is an arbitrary function, such that for every $b \in C^\infty(\R,\R^4)$, the existence of $u \in C^3(\R)$ satisfying $Au \leq b$ is equivalent to $L_\nu b(x) \geq 0$ for all $x \in \R$ and $\nu = 1, \cdots, K$. 
\end{theorem}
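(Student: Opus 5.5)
Suppose, for contradiction, that such operators $L_1,\dots,L_K$ exist. The constraint $Au\le b$ with $A=(-1,1,-1,1)$ reads $-u\le b_1$, $u\le b_2$, $-u\le b_3$, $u\le b_4$, that is,
\[
\max\{-b_1,-b_3\}\le u\le \min\{b_2,b_4\}.
\]
So the existence of $u\in C^3$ with $Au\le b$ is exactly a two-lower/two-upper insertion problem in $C^3$.

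I would encode the Riesz decomposition of Example~\ref{example:NagelRudin} in this form. Given a triple $(f_1,f_2,g)$ with $0\le g\le f_1+f_2$, a decomposition $g=g_1+g_2$ with $0\le g_i\le f_i$ is equivalent to finding $u=g_1$ with
\[
\max\{0,\,g-f_2\}\le u\le \min\{f_1,\,g\}.
\]
Indeed, $g_2=g-u$ then satisfies $0\le g_2\le f_2$, and the map $u\mapsto g_1=u$ preserves $C^3$ regularity. So define the linear map
\[
b=\Phi(f_1,f_2,g):=(0,\;f_1,\;f_2-g,\;g),
\]
so that $-b_1=0$, $b_2=f_1$, $-b_3=g-f_2$, $b_4=g$. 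Then $Au\le\Phi(f_1,f_2,g)$ has a $C^3$ solution if and only if the triple has a $C^3$ Riesz decomposition.

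Apply this to the Nagel--Rudin triple and set $b=\Phi(f_1,f_2,g)\in C^\infty(\R,\R^4)$. By Example~\ref{example:NagelRudin}, no $C^3$ decomposition exists, so no $C^3$ solution $u$ exists. The assumed criterion then yields some $x_0\in\R$ and some $\nu$ with $L_\nu b(x_0)<0$.

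Now invoke Lemma~\ref{lem:lifting} at $x_0$. It gives a triple $(\tilde f_1^{x_0},\tilde f_2^{x_0},\tilde g^{x_0})$ with the same infinite jet at $x_0$, namely $(f_{1,k},f_{2,k},g_k)$ or $(0,0,0)$, each admitting a $C^\infty$ Riesz decomposition. Set $\tilde b=\Phi(\tilde f_1^{x_0},\tilde f_2^{x_0},\tilde g^{x_0})$. Then $A\tilde u\le\tilde b$ has a $C^\infty\subset C^3$ solution, so the criterion forces $L_\nu\tilde b(x)\ge0$ for all $x$, in particular at $x_0$. Because $\Phi$ is linear with constant coefficients, $J^{\infty}_{x_0}\tilde b=J^{\infty}_{x_0}b$. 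Each $L_\nu$ depends at $x_0$ only on derivatives up to order $\bar m$, with arbitrary but fixed coefficients $a_{\nu ir}(x_0)$, so $L_\nu\tilde b(x_0)=L_\nu b(x_0)<0$, a contradiction.

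The main obstacle is confirming the hypotheses feeding this argument, not the argument itself. First, the Nagel--Rudin obstruction must rule out $C^3$ decompositions; this is given, since $C^3\subset B^3$. Second, the per-$k$ triples must genuinely give feasible global smooth data: $g_{1,k}$ and $g_{2,k}$ are smooth because $s^2+\eps_k^2>0$, and the inequalities $0\le g_{i,k}\le f_{i,k}$ hold because $\tfrac12(s+\eps_k)^2\le s^2+\eps_k^2$, and these are supplied by Lemma~\ref{lem:lifting}. The key conceptual point is that the criterion is pointwise in a finite jet, while the obstruction is the non-uniformity across infinitely many $k$ accumulating. The argument never needs the coefficients $a_{\nu ir}$ to be regular; pointwise jet dependence at the single point $x_0$ suffices.
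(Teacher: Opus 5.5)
Your proposal is correct and essentially matches the paper's proof. You use the same encoding $b=(0,f_1,f_2-g,g)$ of the Nagel--Rudin Riesz decomposition problem, and the same jet-matching lifting from Lemma~\ref{lem:lifting}. The only difference is the order of the contradiction: you first use sufficiency to find $x_0$ with $L_\nu b(x_0)<0$ and then contradict this via necessity applied to $\tilde b^{x_0}$. The paper instead first uses necessity to get $L_\nu b\ge 0$ everywhere and then invokes sufficiency.
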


\begin{proof}
    Let $(f_1,f_2,g)$ be the triple of the functions in Example~\ref{example:NagelRudin} and define
    \begin{equation*}
        b := (0, f_1, f_2 - g, g).
    \end{equation*}
    Note that a function $u$ solves $Au \leq b$ if and only if
    \begin{equation*}
        0 \leq u \leq f_1
        \text{\quad and \quad} 0 \leq g - u \leq f_2,
    \end{equation*}
    if and only if the pair $(u, g-u)$ is a Riesz decomposition associated with $(f_1,f_2,g)$.

    Suppose toward a contradiction that the equivalence holds. Fix an arbitrary $x \in \R$ and $\nu \in \set{1,\cdots, K}$. Let $(\tilde f_1^x, \tilde f_2^x, \tilde g^x)$ be as in Lemma~\ref{lem:lifting} so that
    \begin{equation*}
        \jet{\infty}{x}(\tilde f_1^x, \tilde f_2^x, \tilde g^x) \equiv \jet{\infty}{x}(f_1,f_2,g).
    \end{equation*}
    Setting $\tilde b^x := (0 , \tilde f_1^x , \tilde f_2^x - \tilde g^x , \tilde g^x)$, we also have
    \begin{equation*}
        \jet{\bar m}{x}\tilde b^x = \jet{\bar m}{x}b.
    \end{equation*}
    By Lemma~\ref{lem:lifting}, the triple $(\tilde f_1^x, \tilde f_2^x, \tilde g^x)$ admits a $C^\infty$ (and hence, $C^3$) Riesz decomposition, so the necessity of the equivalence forces
    \begin{equation}
        L_\nu b(x) = L_\nu \tilde b^x(x) \geq 0
        .
        \label{eq:Lnb}
    \end{equation}
    
    Since \eqref{eq:Lnb} holds for all $x \in \R$ and $\nu\in \set{1, \cdots, K}$, the sufficiency of the equivalence would imply that $b$ has a $C^3$ Riesz decomposition, contradicting Example~\ref{example:NagelRudin}.
\end{proof}

As a final remark, Lemma~\ref{lem:lifting} implies a stronger result than Theorem~\ref{thm:C3 sharp}: the obstruction is not pointwise; rather, it arises from accumulations of small oscillations. 

More precisely, for every $x\in \R$, the Nagel-Rudin example $b=(0,f_1,f_2-g,g)$ has the same formal Taylor series as a globally $C^\infty$ solvable datum $\tilde b^x$. As a consequence, $C^{m\geq 3}$ solvability of $Au\leq b$ cannot be characterized by any family of pointwise condition --- finite or infinite --- each depending only on a finite jet of $b$ at the point in question. In particular, this rules out arbitrary Boolean combinations of finite-order linear differential equalities and inequalities imposed pointwise as in~Theorem~\ref{thm:main-theorem},

\bibliographystyle{plain}
\bibliography{bib}

\end{document}